\documentclass[12pt, a4paper]{article}

\usepackage{hyperref}

\usepackage{graphicx} % Required for inserting images
\usepackage[sumlimits]{amsmath}
\usepackage{amssymb,amsfonts,amsthm}
\usepackage{color,xcolor}
\usepackage{todonotes}
\usepackage{url}

\newtheorem{thm}{Theorem}
\newtheorem{cor}[thm]{Corollary}
\newtheorem{lem}[thm]{Lemma}
\newtheorem{prop}[thm]{Proposition}
\newtheorem{defin}[thm]{Definition}

\newtheorem{rem}[thm]{Remark}

\def\P{\mathcal{P}}
\def\S{\mathcal{S}}

\def\D{\mathbb{D}}

\def\1{\mathbbm{1}}

\newcommand{\ld}{{\rm ld}\,}

\newcommand{\wce}{{\rm wce}}
\newcommand{\uu}{\mathfrak{u}}
\newcommand{\vv}{\mathfrak{v}}

\allowdisplaybreaks

\newcommand{\nicolas}[1]{\begingroup\color{teal}#1\endgroup}
\newcommand{\peter}[1]{\begingroup\color{violet}#1\endgroup}

\usepackage{mathtools}

\newcommand{\N}{{\mathbb{N}}} % natural numbers {1, 2, ...}
\newcommand{\bsx}{{\boldsymbol{x}}}
\newcommand{\bsy}{{\boldsymbol{y}}}
\newcommand{\bsj}{{\boldsymbol{j}}}
\newcommand{\bsk}{{\boldsymbol{k}}}
\newcommand{\rd}{\,\mathrm{d}} % differential symbol with tiny space in front for use in integrals
\newcommand{\bsh}{{\boldsymbol{h}}}
\newcommand{\ZZ}{{\mathbb{Z}}} % integers
\newcommand{\bszero}{{\boldsymbol{0}}} % vector of zeros
\newcommand{\RR}{{\mathbb{R}}} % reals
\newcommand{\icomp}{\mathrm{i}}
\newcommand{\NN}{{\mathbb{N}}} % natural numbers {1, 2, ...}
\newcommand{\calP}{{\mathcal{P}}}

\newcommand{\bbR}{{\mathbb{R}}}
\newcommand{\bbT}{{\mathbb{T}}}
\newcommand{\bbN}{{\mathbb{N}}}
\newcommand{\bbD}{{\mathbb{D}}}
\newcommand{\bbC}{{\mathbb{C}}}
\newcommand{\bbA}{{\mathbb{A}}}
\newcommand{\FF}{{\mathbb{F}}} % field, finite field
\newcommand{\bbF}{{\mathbb{F}}}

\title{Infinite sequences with optimal diaphony, periodic $L_2$-discrepancy, and beyond}
\author{Peter Kritzer, Nicolas Nagel\footnote{Corresponding author: nicolas.nagel(AT)ricam.oeaw.ac.at}, Friedrich Pillichshammer}
\date{}

\begin{document}

\maketitle

\begin{abstract}
We investigate the periodic $L_2$-discrepancy of infinite sequences $\S_d$ in $[0,1)^d$ and its analytic counterpart, the diaphony. We prove that infinite order-2 digital sequences over $\mathbb{F}_2$ attain the optimal order $L_{2,N}^{{\rm per}}(\S_d) \le C_d (\log N)^{d/2}/N$ for all $N \in \mathbb{N}\setminus \{1\}$, matching known lower bounds for infinitely many $N \in \N$. This confirms the conjectured optimality of order-2 constructions. 
By this result, we improve upon previously known constructions using order-5 digital sequences, and reduce the underlying dimension for the interlacing construction from $5d$ to $2d$, significantly improving practicality. We establish our bounds within a broader framework of quasi-Monte Carlo  integration for periodic Besov spaces $S_{p,q}^rB(\mathbb{T}^d)$ with dominating mixed smoothness $r \in (1/p,2)$, where $p,q\in [1,\infty]$. Rules based on infinite order-2 digital sequences yield worst-case errors of order $(\log N)^{(d-1)(1-1/q)} / N^{ \min(r,1)}$ for $r \not=1$, and $(\log N)^{d(1-1/q)}/N$ for $r=1$, for all $N \in \N\setminus\{1\}$, while preserving extensibility in $N$.
\end{abstract}

\centerline{\begin{minipage}[hc]{130mm}{
{\em Keywords:} Discrepancy, diaphony, numerical integration, quasi-Monte Carlo methods, digital sequences, Besov spaces\\
{\em MSC 2020:} 11K38, 65C05, 65D30, 65Y20}
\end{minipage}}

\section{Introduction}

In quasi-Monte Carlo (QMC) methods, the accuracy of numerical integration hinges on how uniformly sample points cover the domain, in the most basic case the unit cube $[0,1)^d$. Evenly distributed point sets and sequences minimize clustering and gaps, producing more balanced coverage than pseudorandom points and thereby reducing the integration error. This is usually formalized by a Koksma-Hlawka-type inequality, which bounds the error by the product of a measure for the variation of the integrand (in the classical case, the variation in the sense of Hardy and Krause) and a suitable corresponding notion of discrepancy (in the classical case, the star-discrepancy), which measures the irregularity of distribution of a point set. Introductions to the QMC method are, e.g., provided by the books \cite{DT97,DKP22,DP10,LP14,N92} or the survey~\cite{DKS}. Nowadays, there are various notions of (geometric) discrepancies, which in many cases appear in ``integration-discrepancy dualities'' as the worst-case errors of QMC rules in corresponding function spaces (see \cite[Chapter~9]{NW10} or \cite[Section~4]{NP26}). In the present paper we consider the problem of numerical integration of periodic functions. Naturally, in this case, the periodic $L_2$-discrepancy is our preferred quality criterion for the underlying sample points. This kind of discrepancy has received increasing attention during the last decade, see, e.g., \cite{HKP20,HOe,HW12,KP22b,Lev,Na,NP26,P23}.

The periodic $L_2$-discrepancy is a quantitative measure for the irregularity of distribution of point sets and sequences modulo 1 which uses periodic rectangles as test sets. These are defined as follows. For $x,y\in [0,1]$ put
$$B(x,y)\coloneqq\begin{cases}
           [x,y) & \text{if $x\leq y$}, \\
           [0,y)\cup [x,1)& \text{if $x>y$,}
          \end{cases}$$
and for $\bsx=(x_1,\dots,x_d)$ and $\bsy=(y_1,\dots,y_d)$ in $[0,1]^d$ we set $B(\bsx,\bsy)\coloneqq B(x_1,y_1)\times \dots \times B(x_d,y_d)$. Let $\P_{d,N}=\{\bsx_0,\dots,\bsx_{N-1}\}$ be an $N$-point set in $[0,1)^d$ (which will be identified with the $d$-dimensional torus $\mathbb{T}^d$ in the following). Then the periodic $L_2$-discrepancy of $\P_{d,N}$ is defined as
$$ L_{2,N}^{\mathrm{per}}(\P_{d,N})\coloneqq \left(\int_{[0,1]^{2d}}\left|\frac{\#(\P_{d,N} \cap B(\bsx,\bsy))}{N}-\lambda(B(\bsx,\bsy))\right|^2\rd (\bsx,\bsy)\right)^{1/2},$$
where $\#(\cdot)$ denotes the number of points in a set and $\lambda$ the Lebesgue (uniform) measure of a set. For an infinite sequence $\S_d=(\bsx_n)_{n \ge 0}$ let $\S_{d,N}$ denote the set $\{\bsx_0,\ldots,\bsx_{N-1}\}$ consisting of the initial $N$ elements of the sequence and define $$L_{2,N}^{\mathrm{per}}(\S_d)\coloneqq  L_{2,N}^{\mathrm{per}}(\S_{d,N}).$$

The periodic $L_2$-discrepancy can be expressed in terms of exponential sums. For $\P_{d,N}$ as above we have
\begin{equation}\label{pr_dia}
L_{2,N}^{{\rm per}}(\P_{d,N})=\left(\frac{1}{3^d} \sum_{\bsh \in \ZZ^d\setminus\{\bszero\}} \frac{1}{r(\bsh)^2} \left|\frac{1}{N} \sum_{n=0}^{N-1} \exp(2 \pi \icomp \bsh \cdot \bsx_n)\right|^2\right)^{1/2},
\end{equation} 
where $\icomp=\sqrt{-1}$, where ``$\cdot$'' denotes the usual Euclidean inner product in $\RR^d$, and where for $\bsh=(h_1,\ldots,h_d)\in \ZZ^d$ we set 
\begin{equation*}
r(\bsh)\coloneqq \prod_{j=1}^d r(h_j) \ \ \ \mbox{ and } \ \ r(h_j)\coloneqq \left\{ 
\begin{array}{ll}
1 & \mbox{ if $h_j=0$},\\
\frac{2 \pi |h_j|}{\sqrt{6}} & \mbox{ if $h_j\not=0$.} 
\end{array}\right.
\end{equation*}
For a proof of this relation, see \cite[Theorem~1]{Lev}. %or \cite[p.~390]{HOe}.

Equation~\eqref{pr_dia} shows that the periodic $L_2$-discrepancy is---up to a multiplicative factor depending on $d$---asymptotically equivalent to the diaphony. Diaphony is another quantitative measure for the irregularity of distribution which has been introduced by Zinterhof~\cite{zint} in 1976 (see also \cite{DT97}). For a finite set $\P_{d,N}$ as above the diaphony is defined as $$F_N(\P_{d,N})\coloneqq \left(\sum_{\bsh \in \ZZ^d\setminus\{\bszero\}} \frac{1}{\rho(\bsh)^2} \left|\frac{1}{N} \sum_{n=0}^{N-1} \exp(2 \pi \icomp \bsh \cdot \bsx_n)\right|^2\right)^{1/2},$$ where $\rho(\bsh)\coloneqq \prod_{j=1}^d \max(1,|h_j|)$ for $\bsh=(h_1,\ldots,h_d) \in \ZZ^d$. For an infinite sequence $\S_d$ the diaphony $F_N(\S_d)$ is the diaphony of the initial $N$ elements of $\S_d$, i.e., $$F_N(\S_d)\coloneqq F_N(\S_{d,N})\quad \mbox{for all } N \in \NN.$$ 

We express the fact that periodic $L_2$-discrepancy and diaphony are asymptotically equivalent up to a factor depending only on $d$ in the form $$L_{2,N}^{{\rm per}}(\P_{d,N})  \asymp_d F_N(\P_{d,N}).$$ As a consequence, the periodic $L_2$-discrepancy can be understood as a geometrical interpretation of the diaphony, which is of analytic nature. From this point of view, the results on periodic $L_2$-discrepancy presented in this paper also directly apply to the diaphony and vice versa.

The periodic $L_2$-discrepancy/diaphony is also closely related to the worst-case error of QMC rules in suitable spaces of smooth periodic functions over $[0,1]^d$, see equation~\eqref{eq:wce_lper} in Section~\ref{sec:besov}. This is a major motivation for the study of the periodic $L_2$-discrepancy/diaphony and will be discussed in more detail in Section~\ref{sec:besov}.\\

We pause briefly to explain important notation that is used throughout this work. For functions $f,g:D \subseteq \mathbb{N} \rightarrow \mathbb{R}$ with $g \ge 0$ the notation $f(N) \lesssim g(N)$ means that there exists some $C>0$ such that $f(N) \le C g(N)$ for all $N \in D$. If we want to stress that $C$ depends on some parameters, say $a,b$, then this is indicated by writing $f(N) \lesssim_{a,b} g(N)$. If in addition also $f \ge 0$ and if we have $f(N) \lesssim g(N)$ and $g(N) \lesssim f(N)$, then we write $f(N) \asymp g(N)$ and similarly $f(N) \asymp_{a,b} g(N)$ if we want to emphasize the dependence on additional parameters $a, b$, etc.\\ 

The periodic $L_2$-discrepancy/diaphony of {\it finite} point sets has been studied in multiple papers, for example in \cite{HKP20,HOe,HMOU,KP22b}. In the present work we are interested in the periodic $L_2$-discrepancy/diaphony of \textit{infinite} sequences (in dimension $d$). It is known that for any $d \in \NN$ there exists a quantity $c_d>0$ such that for any sequence $\S_d$ in $[0,1)^d$ we have 
\begin{equation}\label{dia:lbd}
L_{2,N}^{{\rm per}}(\S_d) \ge c_d \, \frac{(\log N)^{d/2}}{N} \qquad \mbox{for infinitely many } N \in \NN\setminus\{1\}.
\end{equation}
%\peter{[Das $\setminus \{1\}$ erscheint mir fuer die untere Schranke wenig sinnvoll. Und die Schranke ist ja auch fuer $N=1$ trivial richtig.]}
This has been shown first in the context of diaphony by Pro{\u\i}nov~\cite{pro2000} (this work is only available in Bulgarian). A discussion of Pro{\u\i}nov's work can be found in \cite{kirk}. Later, a direct proof in terms of the periodic $L_2$-discrepancy has been provided in \cite{KP22a}.

On the other hand, for every $d \in \NN$ there exists an infinite sequence $\S_d$ in $[0,1)^d$ such that 
\begin{equation}\label{dia:ubd}
L_{2,N}^{{\rm per}}(\S_d) \lesssim_d \, \frac{(\log N)^{d/2}}{N} \qquad \mbox{for all } N \in \NN\setminus\{1\}.
\end{equation}
Combining the two equations \eqref{dia:lbd} and \eqref{dia:ubd} shows that 
$(\log N)^{d/2}/N$ is the exact asymptotic order of magnitude in $N$ of the optimal periodic $L_2$-discrepancy/diaphony for infinite sequences in dimension $d$. 

One-dimensional %(i.e., $d=1$) \peter{[Brauchen wir die Klammer wirklich? Mir kommt das irgendwie selbstverstaendlich vor...]}
infinite sequences whose periodic $L_2$-discrepancy satisfies the bound \eqref{dia:ubd} are given in, e.g., \cite{chafa,g96,pag,pro1988a,pg}. These constructions comprise the well-known van der Corput sequence as shown by Pro{\u\i}nov and Grozdanov~\cite{pg}. 

For arbitrary $d \in \NN$ the upper bound \eqref{dia:ubd} is obtained by the means of order-5 digital sequences over the finite field $\mathbb{F}_2$, as shown in \cite{P23}. The construction of an order-5 digital sequence is based on an ordinary digital sequence in the sense of Niederreiter~\cite{N87} in dimension $d'=5d$ via a procedure called interlacing (see Section~\ref{sec:dignet}). This is a major disadvantage of these sequences and makes the practical application of the existing result infeasible in higher dimensions. A remedy would be to reduce the order parameter, which can be seen as a complexity reduction in the construction of the sequences and, therefore, in the whole method. In fact, already in \cite{P23} it is conjectured that the optimal order \eqref{dia:ubd} of periodic $L_2$-discrepancy/diaphony of infinite sequences in dimension $d$ is achieved by means of order-2 (instead of order-5) digital sequences. In this way, the dimension of the underlying ordinary digital sequences is reduced from $d'=5 d$ to $d'=2 d$, which represents a significant improvement in terms of practicality.

In this paper, we shall establish our result in a much broader framework and study QMC integration of multivariate periodic functions by means of infinite sequences. A QMC rule approximates the integral of a function $f: \mathbb T^d \rightarrow \mathbb R$ using only a finite set of sample nodes $\calP_{d, N}$ in $\mathbb T^d$, $\#\calP_{d, N} = N$, by means of
$$
\int_{\mathbb T^d} f(\bsx) \, \mathrm d\bsx \approx \frac1N \sum_{\bsx \in \calP_{d, N}} f(\bsx).
$$
The quality of this approximation for functions $f$ in a Banach space $F \subseteq \{f: \mathbb T^d \rightarrow \bbR\}$ of test functions with norm $\|\cdot\|_F$ depends on the set of sample nodes and can be quantified via the worst-case error
$$
\wce(\calP_{d, N}, F) \coloneqq \sup_{\substack{f \in F \\ \|f\|_F \leq 1}} \left|\int_{\mathbb T^d} f(\bsx) \, \mathrm d\bsx - \frac1N \sum_{\bsx \in \calP_{d, N}} f(\bsx)\right|.
$$
The underlying function space setting for our analysis is based on Besov spaces with dominating mixed smoothness, $S_{p,q}^rB(\mathbb{T}^d)$, with smoothness parameter $r \in (1/p,2)$. The necessary definitions will be given in Section~\ref{sec:besov}. In our main result, which is Theorem~\ref{thm1} in Section~\ref{sec:main}, we show that QMC rules based on infinite order-2 digital sequences achieve a convergence rate of the worst-case error of order $N^{-\min(r,1)} (\log N)^{(d-1)(1-1/q)}$ if $r \not=1$ and $N^{-1} (\log N)^{d(1-1/q)}$ if $r=1$ (this case comprises the periodic $L_2$-discrepancy/diaphony as a special case;  see Corollary~\ref{cor:perL2}, also in Section~\ref{sec:main}) for all $N \in \NN\setminus\{1\}$. The fundamentals of digital sequences necessary for this article are explained in Section~\ref{sec:dignet}. Our result can be seen as the ``infinite sequence variant'' of the result for finite point sets in \cite{HMOU}. Rules based on infinite sequences have the advantage that they are extensible in the number of integration nodes $N$. This means that in order to increase $N$ in a QMC rule, one only needs to evaluate the integrand at the additional nodes, and there is no need to discard previous function evaluations. QMC rules that are extensible in $N$ in this sense are sometimes referred to as rules of ``open type''; see, e.g., \cite[Section~2.3]{DKS}.

\section{Besov spaces and the Faber basis}\label{sec:besov}

In this section we collect basic definitions and results from \cite{HMOU} regarding Besov spaces and the Faber basis. The definitions require extensive explanations; we try to minimize the necessary effort while ensuring proper understanding.

For the analysis we will use function spaces based on integrability and regularity assumptions on the set $F$ of test functions. Concretely, for $p,q\in [1,\infty]$ consider the Besov space $S^r_{p, q}B(\mathbb T^d)$ of dominating mixed smoothness $r$. These spaces and their respective norms $\|\cdot\|_{S_{p, q}^rB(\bbT^d)}$ can be defined via dyadic decompositions of the Fourier spectrum or via decay conditions on moduli of continuity, see \cite{HMOU} for details. Relevantly for us, in the range $r \in (1/p,2)$ functions from these spaces fulfill a decay condition when represented in a certain basis. We present this characterization in greater detail. In what follows, let $\bbN_0 \coloneqq \{0\} \cup \bbN$ be the nonnegative integers and $\bbN_{-1} \coloneqq \{-1\} \cup \bbN_0$.

Consider the hat function
$$
v(x) \coloneqq \begin{cases}
    1-|2x-1| & \mbox{if } x \in [0,1], \\
    0 & \text{else,}
\end{cases}
$$
for $x \in \bbR$. The Faber basis (also known as hierarchical basis) is given by
$$
v_{-1, 0}(x) \coloneqq  1 \qquad \mbox{ and }\qquad v_{j, k}(x) \coloneqq  v(2^jx-k)
$$
for $j \in \bbN_0$, $k \in \bbD_j \coloneqq \{0, 1, \dots, 2^j-1\}$, where for notational convenience we set $\bbD_{-1} \coloneqq \{0\}$. 

We also require second-order differences. For univariate functions $f:\mathbb{T} \rightarrow \mathbb{C}$ for $x \in \bbT$ and $h \in [0,1]$, put 
\begin{equation}\label{2dif:op:1}
\Delta_h(f,x)\coloneqq f(x)-2 f(x+h)+f(x+2 h).    
\end{equation}
It was shown in \cite{Fab} that every continuous function $f: \mathbb T \rightarrow \bbR$ (recalling that we identify $\bbT$ with $[0, 1)$) can be represented pointwise as
$$
f(x) = \sum_{j \in \bbN_{-1}} \sum_{k \in \bbD_j} d_{j, k}(f) v_{j, k}(x)
$$
with coefficients $d_{j, k}(f)$ given by $d_{-1, 0}(f) \coloneqq  f(0)$, and for $j \in \bbN_0, k \in \bbD_j$ via second differences
$$
d_{j, k}(f) \coloneqq  -\frac12 \Delta_{2^{-j-1}}\left(f,\frac{k}{2^j}\right).
$$

In the case of $\bbT^d$ for general $d \in \bbN$ one may proceed via tensorization. Define for $\bsj =(j_1,\ldots,j_d) \in \bbN_{-1}^d$ and
$\bsk \in \bbD_{\bsj} \coloneqq \prod_{i=1}^d \bbD_{j_i}$ the tensor Faber basis
$$
v_{\bsj, \bsk}(\bsx) \coloneqq \prod_{i=1}^d v_{j_i, k_i}(x_i).
$$
Furthermore, define for $f:\bbT^d \rightarrow \bbC$, $\vv \subseteq [d]$, and $\bsh=(h_1,\ldots,h_d) \in [0,1]^d$ the $\vv$-mixed difference operator $\Delta_{\bsh}^{\vv}$ as $$\Delta_{\bsh}^{\vv}\coloneqq \prod_{i \in \vv} \Delta_{h_i,i} \qquad \mbox{ and } \qquad \Delta_{\bsh}^{\emptyset}\coloneqq {\rm id},$$ where ${\rm id}(f)\coloneqq f$ and $\Delta_{h_i,i}$ is the univariate second-order difference operator \eqref{2dif:op:1}, applied to the $i$-th coordinate of $f$ with the other variables unchanged. Then, for continuous functions $f:\bbT^d \rightarrow \bbC$ one obtains
$$
f(\bsx) = \sum_{\bsj \in \bbN_{-1}^d} \sum_{\bsk \in \bbD_\bsj} d_{\bsj, \bsk}(f) v_{\bsj, \bsk}(\bsx)
$$
pointwise, where (see \cite[Equation~(3.6)]{HMOU}), for $\bsj \in \bbN_{-1}^d$ and $\bsk \in \bbD_\bsj$, $$d_{\bsj, \bsk}(f) = \left(-\frac{1}{2}\right)^{|\vv(\bsj)|} \Delta^{\vv(\bsj)}_{(2^{-j_1-1},\ldots,2^{-j_d-1})}\left(f,\left(\frac{k_1}{2^{(j_1)_+}},\ldots,\frac{k_d}{2^{(j_d)_+}} \right)\right),$$ with $\vv(\bsj)\coloneqq\{i : j_i \not=-1\}$ and, for $a \in \bbR$,
\begin{align} \label{pos_part}
    (a)_+\coloneqq\max(a,0).
\end{align}

The main tool in \cite{HMOU} is the following result, characterizing the Besov spaces $S_{p, q}^rB(\mathbb T^d)$ of regularity $r<2$ in terms of the decay of the coefficients $d_{\bsj, \bsk}(f)$. For its statement in Proposition~\ref{prop:Besov_norm}, define the auxiliary norm
$$
\|f\|_{s_{p, q}^r} \coloneqq \left[\sum_{\bsj \in \bbN_{-1}^d} 2^{|\bsj|_1 (r-1/p) q} \left(\sum_{\bsk \in \bbD_\bsj} |d_{\bsj, \bsk}(f)|^p\right)^{q/p}\right]^{1/q}
$$
with the usual modifications if $p = \infty$ or $q = \infty$. Here, we write $|\bsj|_1 \coloneqq |j_1| + \dots + |j_d|$.

\begin{prop}\label{prop:Besov_norm}
Let $p,q \in [1,\infty]$ and let $r \in (1/p,2)$. There is a constant $c_{p, q, r, d} > 0$ depending only on $p, q, r$, and $d$, such that
$$
\|f\|_{S_{p, q}^rB(\bbT^d)} \geq c_{p, q, r, d} \|f\|_{s_{p, q}^r}
$$
for all $f \in S_{p, q}^rB(\bbT^d)$. If in addition $r < 1+1/p$, there is a constant $C_{p, q, r, d} > 0$ depending only on $p, q, r$, and $d$, such that
$$
\|f\|_{S_{p, q}^rB(\bbT^d)} \leq C_{p, q, r, d} \|f\|_{s_{p, q}^r}
$$
for all $f \in S_{p, q}^rB(\bbT^d)$.
\end{prop}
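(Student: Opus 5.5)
This is the Faber-basis characterization of Besov spaces of dominating mixed smoothness from \cite{HMOU}, itself an adaptation of Triebel's univariate and tensor-product results. I would prove it by comparing two representations of $f$: the Littlewood--Paley decomposition $f=\sum_{\bsj}\delta_{\bsj}f$ defining $\|\cdot\|_{S^r_{p,q}B}$, and the Faber expansion with coefficients $d_{\bsj,\bsk}(f)$. I would also use the equivalent characterization via mixed moduli of smoothness of order two, which is valid for $0<r<2$.

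\emph{Lower bound.} I would start from the explicit formula for $d_{\bsj,\bsk}(f)$ as a scaled mixed second difference $\Delta^{\vv(\bsj)}_{2^{-\bsj-1}}$ evaluated at the grid points $\bsk 2^{-\bsj}$. The task is to bound the discrete quantity
\[
2^{-|\bsj|_1/p}\Bigl(\sum_{\bsk}|d_{\bsj,\bsk}(f)|^p\Bigr)^{1/p}
\]
by a continuous $L_p$-modulus. Point evaluations are not controlled by $L_p$ norms. However, since $r>1/p$, I would decompose $f=\sum_{\boldsymbol\ell}\delta_{\boldsymbol\ell}f$ into trigonometric polynomials with frequencies of size about $2^{\boldsymbol\ell}$. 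For each block I would apply two tools:
\begin{itemize}
\item a Marcinkiewicz--Zygmund / Nikol'skii-type inequality, which for $\ell_i\le j_i$ gives
\[
\Bigl(2^{-|\bsj|_1}\sum_{\bsk}|g(\bsk2^{-\bsj})|^p\Bigr)^{1/p}\lesssim\|g\|_p ;
\]
\item the Bernstein bound $|\Delta_h g|\lesssim (h2^{\ell})^2\|g\|$ in the directions where $\ell_i<j_i$, and the trivial bound in the directions where $\ell_i\ge j_i$, combined with a sampling estimate that costs a factor $2^{(\ell_i-j_i)/p}$.
\end{itemize}
This yields
\[
2^{|\bsj|_1(r-1/p)}\|d_{\bsj,\cdot}\|_{\ell_p}\lesssim\sum_{\boldsymbol\ell}\prod_i 2^{-\alpha|\ell_i-j_i|}\,2^{r|\boldsymbol\ell|_1}\|\delta_{\boldsymbol\ell}f\|_p ,
\]
with $\alpha=\min(2-r,\,r-1/p)>0$. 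A discrete Young/Hardy inequality in $\ell_q$ then gives $\|f\|_{s^r_{p,q}}\lesssim\|f\|_{S^r_{p,q}B}$, and hence the first inequality of Proposition~\ref{prop:Besov_norm}. Since the kernel factorizes, the tensor structure reduces everything to one-dimensional geometric sums.

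\emph{Upper bound.} Here I would write $f=\sum_{\bsj,\bsk}d_{\bsj,\bsk}v_{\bsj,\bsk}$ and estimate $\delta_{\boldsymbol\ell}$ applied to each level $\bsj$. The hat function $v$ has Fourier coefficients decaying like $|m|^{-2}$, and it is piecewise linear with jumps in its derivative. The bound
\[
\Bigl\|\delta_{\boldsymbol\ell}\sum_{\bsk}d_{\bsj,\bsk}v_{\bsj,\bsk}\Bigr\|_p\lesssim\prod_i 2^{-\beta_i}\,2^{-|\bsj|_1/p}\|d_{\bsj,\cdot}\|_{\ell_p}
\]
has two regimes, treated per coordinate:
\begin{itemize}
\item for $\ell_i\le j_i$, one gets $2^{-\beta_i}=2^{-(j_i-\ell_i)(1-1/p)}$-type decay, using the linear precision of hats against low frequencies;
\item for $\ell_i>j_i$, the decay is $2^{-(\ell_i-j_i)(1+1/p)}$, which reflects the smoothness $1+1/p$ of $v$ in $B^{\cdot}_{p,\infty}$.
\end{itemize}
Multiplying by $2^{r|\boldsymbol\ell|_1}$ and summing again with a discrete convolution inequality converges exactly when $r<1+1/p$ (the high-frequency regime) and $r>1/p$ (the low-frequency regime). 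This gives the second inequality.

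The main obstacle is the lower bound in the regime $\ell_i>j_i$, where sampling a high-frequency block on a coarse grid must be controlled. I would handle it with the standard averaging trick. Since $r>1/p$, $f$ is continuous, and I would bound the point evaluation through the integral over a small cube, using $B^{r}_{p,q}\hookrightarrow C$ locally. Equivalently, I would bound the supremum of local second differences by the $L_p$ modulus via the Ul'yanov-type inequality $\omega_2(f,t)_\infty\lesssim\int_0^t s^{-1/p}\omega_2(f,s)_p\,ds/s$, applied in each coordinate in mixed form. Keeping the constants dependent only on $p,q,r,d$ is then routine bookkeeping.
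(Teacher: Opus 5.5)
The paper gives no proof of this proposition; it imports both inequalities from \cite[Propositions~3.4 and 3.5]{HMOU}. Your plan is therefore an independent, self-contained route. You work directly from the Littlewood--Paley definition and control the sampled mixed second differences of each block $\delta_{\boldsymbol{\ell}}f$ in two ways: by a Bernstein estimate in the coordinates with $\ell_i<j_i$, and by a Plancherel--P\'olya (Peetre maximal function) sampling bound costing $2^{(\ell_i-j_i)_+/p}$ in the others. You then finish with a discrete convolution inequality with kernel $\prod_i 2^{-\alpha|\ell_i-j_i|}$, $\alpha=\min(2-r,r-1/p)$.

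For the lower bound this is correct as sketched. Since $r>1/p$, the series $\sum_{\boldsymbol{\ell}}\delta_{\boldsymbol{\ell}}f$ converges uniformly, so $d_{\bsj,\bsk}$ may be applied blockwise. Coordinates with $j_i=-1$ are plain point evaluations and fall under the same sampling bound. Compared with quoting \cite{HMOU}, your route makes the restrictions transparent: $r<2$ comes from the order of the difference, $r>1/p$ from coarse sampling, and $r<1+1/p$ from the $B^{1+1/p}_{p,\infty}$-regularity of the hat function. The price is that you need mixed sampling inequalities for band-limited functions, uniformly for $p\in[1,\infty]$ including the endpoints.

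In the upper bound, one claimed estimate is false: for $\ell_i\le j_i$ there is no decay $2^{-(j_i-\ell_i)(1-1/p)}$. Take $d=1$ and $2^{\ell}\ll 2^{j}$. Let $m_0\asymp 2^{\ell}$ be a frequency where the $\ell$-th Littlewood--Paley multiplier equals $1$, and set $d_{j,k}=\cos(2\pi m_0 k 2^{-j})$. Then $g_j=\sum_k d_{j,k}v_{j,k}$ has $\widehat{g_j}(m_0)=\tfrac12\widehat{v}(m_0 2^{-j})\approx \tfrac14$, so $\|\delta_\ell g_j\|_p\ge\|\delta_\ell g_j\|_1\gtrsim 1$. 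On the other hand $2^{-j/p}\|d_{j,\cdot}\|_{\ell_p}\le 1$, so for $p>1$ your factor would force this to $0$ as $j\to\infty$. Linear precision of the hats cannot help, because in the synthesis direction the coefficients are arbitrary.

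The repair is easy. In those coordinates use only $\sup_{\boldsymbol{\ell}}\|\delta_{\boldsymbol{\ell}}\|_{L_p\to L_p}<\infty$, i.e.\ $\beta_i=0$. With $a_\bsj=2^{|\bsj|_1(r-1/p)}\|d_{\bsj,\cdot}\|_{\ell_p}$, the weight mismatch $2^{r\ell_i}$ versus $2^{rj_i}$ already produces $2^{-r(j_i-\ell_i)}$. The high-frequency coordinates give $2^{-(1+1/p-r)(\ell_i-j_i)}$, so the convolution step needs only $0<r<1+1/p$.

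Consequently, your claim that the low-frequency regime converges exactly when $r>1/p$ is not right. In the upper bound, $r>1/p$ enters only through continuity of $f$. Continuity makes the rectangular Faber partial sums (tensor piecewise linear interpolants) converge uniformly to $f$, which lets you apply $\delta_{\boldsymbol{\ell}}$ termwise.

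Finally, your closing paragraph is unnecessary and slightly misleading. The coarse-grid case $\ell_i>j_i$ is exactly the sampling bound you already used. A global Ul'yanov inequality would only control $\sup_{\bsk}|d_{\bsj,\bsk}|$, and passing to the $\ell_p$-norm over $\bsk$ then loses a factor $2^{|\bsj|_1/p}$ unless the inequality is localized to dyadic boxes.
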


A proof can be found in \cite[Proposition~3.4 and 3.5]{HMOU}.

\begin{rem}\rm
The condition $1/p < r$ is required to have the continuous embedding $S_{p, q}^rB(\mathbb T^d) \hookrightarrow C(\mathbb T^d)$ into the space of continuous functions. The upper bound $r < 2$ comes from the intrinsic regularity of the Faber functions $v_{\bsj, \bsk}$ with kinks at dyadic rationals. A similar decay behavior for the endpoint case $r=2$ in the Sobolev setting was recently given in \cite{KNU} (based on \cite{BG}) and characterizations via other basis functions were considered in \cite{SU}.
\end{rem}

In the case of $p=q=2$ one has $S_{2, 2}^rB(\mathbb T^d) = H_{\text{mix}}^r(\mathbb T^d)$ with the usual Sobolev space of integrability $2$ and dominating mixed smoothness $r > 1/2$ (also called Korobov space in the literature). For integral smoothness parameters $r \in \bbN$ these Sobolev spaces are given by functions with weak partial derivatives of order up to $r$ in $L_2(\mathbb T^d)$, see \cite{DTU}. For general $r > 1/2$ they can be characterized via decay conditions on Fourier coefficients, see e.g. \cite{D07}. The space $H_{\text{mix}}^r(\mathbb T^d)$ has the structure of a reproducing kernel Hilbert space \cite{BT}, which significantly facilitates the analysis of the QMC rule (see e.g. \cite{D07,DP10}). It is well known that
\begin{align} \label{eq:wce_lper}
    \wce(\calP_{d, N}, H_{\text{mix}}^1(\mathbb T^d)) \asymp_d L_{2,N}^{{\rm per}}(\P_{d,N})
\end{align}
with implied factors only depending on $d$, see, e.g., \cite{HKP20, HOe}. We thus see how bounds on the worst-case error $\wce(\calP_{d, N}, S_{p, q}^rB(\mathbb T^d))$ can be used to study the periodic $L_2$-discrepancy of point sets or sequences.

The worst-case error $\wce(\calP_{d, N}, S_{p, q}^rB(\mathbb T^d))$ may now be bounded in terms of the specific errors of the tensor Faber functions
\begin{equation}\label{def:cjk}
c_{\bsj, \bsk}(\calP_{d, N}) \coloneqq \frac1N \sum_{\bsx \in \calP_{d, N}} v_{\bsj, \bsk}(\bsx) - \int_{\bbT^d} v_{\bsj, \bsk}(\bsx) \, \mathrm d\bsx.
\end{equation}
The proof of Lemma~\ref{est:err:net} below is a simple application of Proposition~\ref{prop:Besov_norm} and H\"older's inequality; see \cite[Proof of Theorem~5.3]{HMOU} for details, and in particular \cite[Equation~(5.8)]{HMOU}.

\begin{lem}\label{est:err:net}
Let $p, q \in [1,\infty]$ and $r \in (1/p ,2)$. Also let $p', q' \in [1,\infty]$ be the H\"older conjugates of $p$ and $q$, respectively given by $1/p+1/p'=1$ and $1/q+1/q'=1$. Then for all $N$-point sets $\calP_{d, N}$ in $\bbT^d$ it holds that
    \begin{align*} 
        \wce(\calP_{d, N}, S_{p, q}^rB(\mathbb T^d)) \lesssim_{p,q,r,d} \left[\sum_{\bsj \in \bbN_{-1}^d} 2^{-|\bsj|_1 (r-1/p) q'} \left(\sum_{\bsk \in \bbD_\bsj} |c_{\bsj, \bsk}(\calP_{d, N})|^{p'}\right)^{q'/p'}\right]^{1/q'}
    \end{align*}
    with an implied factor only depending on $p, q, r$, and $d$, and the usual adaptions if $p=1$ or $q=1$.
\end{lem}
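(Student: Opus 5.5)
The plan is to expand the integrand in the Faber basis, rewrite the QMC error as a pairing between Faber coefficients and the quantities $c_{\bsj,\bsk}$, and then apply H\"older's inequality twice, once in $\bsk$ and once in $\bsj$. Fix $f \in S_{p,q}^rB(\bbT^d)$ with $\|f\|_{S_{p,q}^rB(\bbT^d)} \le 1$. Since $r>1/p$, $f$ is continuous, and the tensorized Faber representation
$$
f(\bsx)=\sum_{\bsj\in\bbN_{-1}^d}\sum_{\bsk\in\bbD_\bsj} d_{\bsj,\bsk}(f)\,v_{\bsj,\bsk}(\bsx)
$$
holds pointwise. Integrating this expansion against Lebesgue measure, and also against the empirical measure of $\calP_{d,N}$, and subtracting gives
$$
\frac1N\sum_{\bsx\in\calP_{d,N}}f(\bsx)-\int_{\bbT^d}f(\bsx)\rd\bsx=\sum_{\bsj}\sum_{\bsk} d_{\bsj,\bsk}(f)\,c_{\bsj,\bsk}(\calP_{d,N}),
$$
with $c_{\bsj,\bsk}$ as in \eqref{def:cjk}.

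To bound this double sum, I first apply H\"older's inequality in $\bsk$ for each fixed $\bsj$, with exponents $p,p'$. I then insert the weights $2^{|\bsj|_1(r-1/p)}$ and $2^{-|\bsj|_1(r-1/p)}$ and apply H\"older in $\bsj$ with exponents $q,q'$. This yields
$$
|{\rm err}|\le \|f\|_{s^r_{p,q}}\left[\sum_{\bsj}2^{-|\bsj|_1(r-1/p)q'}\Big(\sum_{\bsk}|c_{\bsj,\bsk}|^{p'}\Big)^{q'/p'}\right]^{1/q'}.
$$
By the first inequality of Proposition~\ref{prop:Besov_norm}, which only needs $r\in(1/p,2)$, we have $\|f\|_{s^r_{p,q}}\le c_{p,q,r,d}^{-1}$. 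Taking the supremum over $f$ then gives the claim. When $p=1$ or $q=1$, the inner or outer $\ell_{p'}$/$\ell_{q'}$ norm is a supremum, and the usual modifications apply.

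The main obstacle is justifying the interchange of the infinite Faber series with integration and summation, since the series converges only pointwise. I would handle this by truncating $f$ at levels $|\bsj|_\infty\le m$, which gives the piecewise multilinear interpolant on the dyadic grid of level $m+1$. This interpolant converges uniformly to $f$ because $f$ is continuous on the compact torus. Both functionals involved are finite sums or integrals, so they are continuous with respect to uniform convergence. Consequently the identity holds for the limit of the partial sums over $|\bsj|_\infty\le m$, and absolute convergence of the double series follows from the H\"older bound above when the right-hand side is finite. If the right-hand side is infinite, the statement is trivial.
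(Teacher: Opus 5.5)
Your proposal is correct and follows essentially the same route as the paper, which defers to the proof of Theorem~5.3 in \cite{HMOU}: expand $f$ in the Faber basis, write the QMC error as $\sum_{\bsj,\bsk} d_{\bsj,\bsk}(f)\,c_{\bsj,\bsk}(\calP_{d,N})$, apply H\"older in $\bsk$ and then in $\bsj$ with the weights $2^{\pm|\bsj|_1(r-1/p)}$, and bound $\|f\|_{s^r_{p,q}}$ by the first inequality of Proposition~\ref{prop:Besov_norm}. Your justification of interchanging the series with the two functionals is valid and is a detail the paper leaves implicit: the truncations at $|\bsj|_\infty\le m$ are the tensor-product piecewise linear interpolants on the dyadic grid of level $m+1$, and these converge uniformly to $f$.
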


It remains to discuss the relation of the errors $c_{\bsj, \bsk}(\calP_{d, N})$ of the Faber basis functions to the Haar coefficients of the local discrepancy function, which are an important analytic tool in the study of digital nets and sequences. The local discrepancy function $D_{\calP_{d, N}}: [0, 1)^d \rightarrow \bbR$ of an $N$-point set $\calP_{d, N}$ in $[0, 1)^d$ is given by
$$
D_{\calP_{d, N}}(\bsx) \coloneqq \frac{\#(\calP_{d, N} \cap [\mathbf 0, \bsx))}{N}  - \prod_{i=1}^d x_i \qquad \mbox{for } \bsx \in [0,1]^d,
$$ 
where $[\mathbf 0, \bsx) \coloneqq [0, x_1)\times \cdots \times [0,x_d)$.

Let
\begin{align} \label{eq:Ijk}
    I_{j, k} \coloneqq \left[\frac{k}{2^j}, \frac{k+1}{2^j}\right)
\end{align}
be the dyadic interval for $j \in \bbN_0, k \in \bbD_j$. Observe that $I_{j, k} = I_{k+1, 2k} \cup I_{k+1, 2k+1}$ disjointly partitions the interval $I_{j, k}$ into a left half $I_{j+1, 2k}$ and a right half $I_{k+1, 2k+1}$. For $j \in \bbN_{-1}$ and $k \in \bbD_j$ define the {Haar basis} $h_{j, k}: [0, 1) \rightarrow \bbR$ by $h_{-1, 0}(x) \coloneqq 1$ and, if $j \not= -1$, by
$$
h_{j, k}(x) \coloneqq \left\{\begin{array}{rl}
    1 & \mbox{if } x \in I_{j+1, 2k}, \\
    -1 & \mbox{if } x \in I_{j+1, 2k+1}, \\
    0 & \mbox{else}.
\end{array}
\right.
$$
Also, define its tensor product analogue
$$
h_{\bsj, \bsk}(\bsx) \coloneqq \prod_{i=1}^d h_{j_i, k_i}(x_i)\qquad \mbox{for } \bsj \in \bbN_{-1}^d,\, \bsk \in \bbD_\bsj,\, \bsx \in [0, 1)^d.
$$
%Since the Haar system forms an orthogonal basis, the discrepancy function $D_{\calP_{d, N}}$ may now be expressed by the in $L_2([0, 1)^d)$ converging series \todo{Why $\sim$?}
%$$
%D_{\calP_{d, N}} \fritz{\sim} \sum_{\bsj \in \bbN_{-1}^d} 2^{|\bsj|_+} \sum_{\bsk \in \bbD_\bsj} \mu_{\bsj, \bsk}(D_{\calP_{d, N}}) h_{\bsj, \bsk}
%$$
The Haar coefficients of the local discrepancy function are defined as 
$$
\mu_{\bsj, \bsk}(D_{\calP_{d, N}}) \coloneqq \int_{[0, 1)^d} D_{\calP_{d, N}}(\bsx) h_{\bsj, \bsk}(\bsx) \, \mathrm d\bsx.
$$

The next lemma, which is \cite[Lemma~5.2]{HMOU}, shows that the Haar coefficients $\mu_{\bsj, \bsk}(D_{\calP_{d, N}})$ can be used to determine the errors $c_{\bsj, \bsk}(\calP_{d, N})$ of the Faber basis functions from \eqref{def:cjk}. Here we write, like in \eqref{pos_part}, $|\bsj|_+\coloneqq(j_1)_++\cdots+(j_d)_+$.

\begin{lem}\label{le1}
For an $N$-point set $\calP_{d, N}$ in $[0,1)^d$ we have:
\begin{enumerate}
    \item[(i)] If $\bsj \in \bbN_0^d$ and $\bsk \in \bbD_{\bsj}$, then $$\mu_{\bsj,\bsk}(D_{\calP_{d, N}})=(-1)^d 2^{-|\bsj|_1}  c_{\bsj, \bsk}(\calP_{d, N}).$$ 
    \item[(ii)] If $\bsj \in \bbN_{-1}^d \setminus \bbN_0^d$ and $\bsk \in \bbD_{\bsj}$, then $$\mu_{\bsj,\bsk}(D_{\overline{\calP}_{d, N}})=(-1)^s 2^{-|\bsj|_+} c_{\bsj, \bsk}(\P),$$ where $\overline{\calP}_{d, N}$ denotes the set of points obtained from the projection of the elements of $\P_{d, N}$ onto those $s$ coordinates $x_i$ where $j_i\not=-1$. Furthermore, $\mu_{\bsj,\bsk}(D_{\overline{\P}_{d, N}})$ is the Haar coefficient with respect to the $s$-variate function $D_{\overline{\P}_{d, N}}$. 
\end{enumerate}
\end{lem}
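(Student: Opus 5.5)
The identity is linear in the point set, so I will prove it for a single point $\bsy\in[0,1)^d$ and then average over $\calP_{d,N}$. For one point, $D_{\{\bsy\}}(\bsx)=\prod_{i}\mathbf 1[y_i<x_i]-\prod_i x_i$. Both the indicator and $\prod_i x_i$ are products of univariate functions, and for $\bsj\in\bbN_0^d$ the Haar function $h_{\bsj,\bsk}$ is a tensor product. Hence the integral $\mu_{\bsj,\bsk}$ factorises into univariate integrals. The same factorisation holds for $c_{\bsj,\bsk}$, because $v_{\bsj,\bsk}$ is a tensor product and $\int v_{\bsj,\bsk}=\prod_i\int v_{j_i,k_i}$. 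Part~(i) therefore reduces to two one-dimensional identities, with $j\in\bbN_0$ and $k\in\bbD_j$:
\[
\int_0^1 \mathbf 1[y<x]\,h_{j,k}(x)\rd x=-2^{-j}\,v_{j,k}(y),\qquad
\int_0^1 x\,h_{j,k}(x)\rd x=-2^{-j}\int_0^1 v_{j,k}(x)\rd x .
\]

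To check the first identity, note that $\int_y^1 h_{j,k}(x)\rd x=-\int_0^y h_{j,k}(x)\rd x$, since $h_{j,k}$ has mean zero for $j\ge 0$. The primitive $\int_0^y h_{j,k}$ is a tent supported on $I_{j,k}$ with peak $2^{-j-1}$ at the midpoint, which is exactly $2^{-j-1}v(2^jy-k)$. This would give a factor $2^{-j-1}$ rather than $2^{-j}$. So I will need to recheck the normalisation of $\mu$ against \cite{HMOU}, for instance whether the Haar functions there are scaled by $2$, or whether the convention intended for this paper differs. I will fix whatever constant this computation produces. The second identity follows by integrating $\int_0^1 x\,h_{j,k}(x)\rd x=-\int_0^1\bigl(\int_0^x h_{j,k}\bigr)\rd x$ by parts, which yields the same constant multiplying $\int v_{j,k}$. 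With both identities in hand, expanding the product gives
\[
\mu_{\bsj,\bsk}(D_{\{\bsy\}})=\prod_i\bigl(-2^{-j_i}\bigr)\Bigl(v_{\bsj,\bsk}(\bsy)-\int v_{\bsj,\bsk}\Bigr)=(-1)^d2^{-|\bsj|_1}c_{\bsj,\bsk}(\{\bsy\}).
\]
Averaging over the $N$ points then proves~(i).

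For part~(ii), let $\vv(\bsj)=\{i:j_i\neq -1\}$ with $s=|\vv(\bsj)|$. In every coordinate with $j_i=-1$ we have $v_{-1,0}\equiv 1$, so $v_{\bsj,\bsk}(\bsx)$ depends only on the coordinates in $\vv(\bsj)$. Consequently $c_{\bsj,\bsk}(\calP_{d,N})$ equals the $s$-variate Faber error of the projected set $\overline{\calP}_{d,N}$. Applying part~(i) in dimension $s$ to $\overline{\calP}_{d,N}$ gives the factor $(-1)^s2^{-|\bsj|_+}$, since $|\bsj|_+$ is the sum of the $j_i$ over $i\in\vv(\bsj)$.

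The only real obstacle is the constant in the univariate identity, namely getting $2^{-j}$ rather than $2^{-j-1}$. This depends on matching the normalisation conventions of the Haar and Faber functions, which I will reconcile with \cite[Lemma~5.2]{HMOU}. Everything else is bookkeeping.
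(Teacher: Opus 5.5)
Your argument is correct and is the standard proof. The paper gives no proof of its own and only cites \cite[Lemma~5.2]{HMOU}. The individual steps all hold:
\begin{itemize}
\item reducing to a single point by linearity (this also covers the projected multiset in (ii), where points may coincide);
\item factorising over coordinates;
\item the mean-zero step and the identity $\int_0^y h_{j,k}=2^{-j-1}v_{j,k}(y)$;
\item the integration by parts;
\item reducing (ii) to (i) in dimension $s$ via $v_{-1,0}\equiv 1$.
\end{itemize}

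The point you left open should be settled in favour of your computation, not the printed constant. With $v$, $h_{j,k}$ and $\mu_{\bsj,\bsk}$ exactly as defined in this paper, the primitive identity is exact. So your final display should read $\prod_i(-2^{-j_i-1})$, and the identities are
\[
\mu_{\bsj,\bsk}(D_{\calP_{d,N}})=(-1)^d\,2^{-|\bsj|_1-d}\,c_{\bsj,\bsk}(\calP_{d,N}),\qquad
\mu_{\bsj,\bsk}(D_{\overline{\calP}_{d,N}})=(-1)^s\,2^{-|\bsj|_+-s}\,c_{\bsj,\bsk}(\calP_{d,N}).
\]
A one-point check confirms this. Take $d=1$, $j=k=0$ and $\calP=\{1/2\}$. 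Then $\mu_{0,0}=-1/2+1/4=-1/4$ and $c_{0,0}=1-1/2=1/2$, so the factor is $-1/2$, not $-1$.

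So the statement as printed is too large by a factor $2^{d}$ (resp.\ $2^{s}$). No normalisation consistent with the paper's own definitions recovers it, so there is nothing to reconcile with \cite{HMOU}. Prove the corrected version instead.

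The discrepancy is harmless for the rest of the paper. The lemma is only used in $\Sigma_1$ and $\Sigma_2$ to pass between $c_{\bsj,\bsk}$ and $\mu_{\bsj,\bsk}$. There the extra $2^{d}$ contributes a factor depending only on $d$ and $q'$, which is absorbed into $\lesssim_{p,q,r,d}$.
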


\section{Digital nets and sequences}\label{sec:dignet}

The concepts of digital nets and sequences over finite fields were introduced by Nieder\-rei\-ter~\cite{N87} in 1987. Detailed introductions to this topic can be found in the books \cite{DP10,LP14,N92}. In the present work, we restrict ourselves to the binary case. Let $\mathbb{F}_2$ be the finite field of order~2, identified with the set $\{0,1\}$ equipped with arithmetic operations modulo~2. In the following, let $[d]\coloneqq\{1,\ldots,d\}$, where $d \in \mathbb{N}$.

\paragraph{The digital construction scheme.}

We begin with the definition of digital nets according to Niederreiter, which we present in a slightly more general form here. For $n,q,d \in \N$ with $q \ge n$ let $C_1,\ldots, C_d \in \mathbb{F}_2^{q \times n}$ be $q \times n$ matrices over $\mathbb{F}_2$. For $k \in \{0,\ldots ,2^n-1\}$ with binary expansion $k = k_0 + k_1 2 + \cdots + k_{n-1} 2^{n-1}$, where $k_j \in \{0,1\}$, we define the binary digit vector $\vec{k} = (k_0, k_1, \ldots, k_{n-1})^\top \in \mathbb{F}_2^n$ (the symbol $\top$ means the transpose of a vector or a matrix; hence $\vec{k}$ is a column-vector). Then compute
\begin{equation}\label{matrix_vec_net}
C_j \vec{k} =:(x_{j,k,1}, x_{j,k,2},\ldots,x_{j,k,q})^\top \quad \mbox{for } j \in [d],
\end{equation}
where the matrix vector product is evaluated over $\mathbb{F}_2$. We interpret the entries $x_{j,k,i}$ of the resulting vector in $\mathbb{F}_2^q$ in \eqref{matrix_vec_net} as binary digits in $\{0,1\}$, and put
\begin{equation*}
x_{j,k} \coloneqq \frac{x_{j,k,1}}{2} + \frac{x_{j,k,2}}{2^2} + \cdots + \frac{x_{j,k,q}}{2^q} \in [0,1)
\end{equation*}
and $\boldsymbol{x}_k \coloneqq (x_{1,k}, \ldots, x_{d,k})$, which is a point in $[0,1)^d$. The point set $\P_{d, 2^n}=\{\boldsymbol{x}_0,\ldots,\boldsymbol{x}_{2^n-1}\}$ constructed this way is called a {digital net (over $\mathbb{F}_2$) with generating matrices} $C_1,\ldots,C_d$. A digital net $\P_{d, 2^n}$ consists of $2^n$ points in $[0,1)^d$.\\

A variant of digital nets are so-called digitally shifted digital nets (see \cite[Section~4.4.4]{DP10}). Here, one additionally selects a so-called digital shift vector for each component $j \in [d]$ of the form $\vec{\sigma}_j=(\sigma_{j,1},\sigma_{j,2},\ldots)^{\top} \in \mathbb{F}_2^{\mathbb{N}}$ (for our purposes it suffices to restrict ourselves to the case where only a finite number of components differ from zero) and replaces \eqref{matrix_vec_net} by
\begin{equation*}
{C_j \vec{k} \choose \vec{0}}+\vec{\sigma}_j =:(x_{j,k,1}, x_{j,k,2},x_{j,k,3},\ldots,)^\top \in \mathbb{F}_2^{\mathbb{N}} \quad \mbox{for } j \in [d],
\end{equation*}
where ${C_j \vec{k} \choose \vec{0}}$ in $\mathbb{F}_2^{\mathbb{N}}$ is the $\mathbb{F}_2^q$-vector $C_j \vec{k}$ concatenated with consecutive zeros, and puts
\begin{equation*}
x_{j,k} = \frac{x_{j,k,1}}{2} + \frac{x_{j,k,2}}{2^2} + \frac{x_{j,k,3}}{2^3} +\cdots  \in [0,1).
\end{equation*}

The main objects of our study are digital sequences, which are infinite versions of digital nets. Let now $C_1,\ldots, C_d \in \mathbb{F}_2^{\mathbb{N} \times \mathbb{N}}$ be $\mathbb{N} \times \mathbb{N}$ matrices over $\mathbb{F}_2$. For $C_j = (c_{j,k,\ell})_{k, \ell \in \mathbb{N}}$ it is assumed that for each $\ell \in \mathbb{N}$ there exists a $K(\ell) \in \mathbb{N}$ such that $c_{j,k,\ell} = 0$ for all $k > K(\ell)$. For $k \in \N_0$ with binary expansion $k = k_0 + k_1 2 + \cdots + k_{m-1} 2^{m-1} \in \mathbb{N}_0$,  define the infinite binary digit vector $\vec{k} = (k_0, k_1, \ldots, k_{m-1}, 0, 0, \ldots )^\top \in \mathbb{F}_2^{\mathbb{N}}$. Then compute, over $\mathbb{F}_2$,
\begin{equation*}
C_j \vec{k}=:(x_{j,k,1}, x_{j,k,2},\ldots)^\top \quad \mbox{for } j \in [d]
\end{equation*}
and put, similarly to before,
\begin{equation*}
x_{j,k} = \frac{x_{j,k,1}}{2} + \frac{x_{j,k,2}}{2^2} + \cdots ,
\end{equation*}
and $\boldsymbol{x}_k = (x_{1,k}, \ldots, x_{d,k}) \in [0,1)^d$. A sequence $\S_d=(\boldsymbol{x}_k)_{k \ge 0}$ constructed this way is called a {digital sequence (over $\mathbb{F}_2$) with generating matrices} $C_1,\ldots,C_d$. Since $c_{j,k,\ell}=0$ for all $k$ large enough, the numbers $x_{j,k}$ are always dyadic rationals, i.e., have a finite dyadic expansion, which belong to $[0,1)$. 

\paragraph{Higher order nets and sequences.}%\label{sec_honetssequ}

The choice of the respective generating matrices is crucial for the distribution quality of digital nets and sequences. The following definitions put some restrictions on $C_1,\ldots ,C_d$ with the aim to quantify the quality of equidistribution of the digital net or sequence.

\begin{defin}\rm\label{def_net}
Let $n, q, \alpha \in \N$ with $q \ge \alpha n$ and let $t \in \N_0$ such that $0 \le t \le \alpha n$. Let $C_1,\ldots, C_d \in \mathbb{F}_2^{q \times n}$. Denote the $i$-th row vector of the matrix $C_j$ by $\vec{c}_{j,i}$. The $\vec{c}_{j,i}$ are row vectors of length $n$ with components in $\mathbb{F}_2$. If for all integers $1 \le i_{j,\nu_j} < \cdots <
i_{j,1} \le q$ with 
\[
\sum_{j = 1}^d \sum_{l=1}^{\min(\nu_j,\alpha)} i_{j,l}  \le
\alpha n - t,
\] 
the vectors
\[
\vec{c}_{1,i_{1,\nu_1}}, \ldots, \vec{c}_{1,i_{1,1}}, \ldots,
\vec{c}_{d,i_{d,\nu_d}}, \ldots, \vec{c}_{d,i_{d,1}}
\] 
are linearly independent
over $\mathbb{F}_2$, then the digital net with generating matrices
$C_1,\ldots, C_d$ is called an {order-$\alpha$ digital $(t,n,d)$-net over $\mathbb{F}_2$}. The parameter $t$ is referred to as the quality parameter.
\end{defin}

Furthermore, we consider digital sequences whose initial segments are order-$\alpha$ digital $(t,n,d)$-nets over $\mathbb{F}_2$.

\begin{defin}\rm%\label{def_seq}
Let $\alpha \in \N$ and $t \in \N_0$. Let $C_1,\ldots, C_d \in \mathbb{F}_2^{\mathbb{N} \times \mathbb{N}}$ and let $C_{j, \alpha n \times n}$ denote the left upper $\alpha n \times n$ submatrix of  $C_j$ for $j\in [d]$. If for all $n > t/\alpha$ the matrices $C_{1, \alpha n \times n},\ldots, C_{d, \alpha n \times n}$ generate an order-$\alpha$ digital $(t,n,d)$-net over $\mathbb{F}_2$, then the digital sequence with generating matrices $C_1,\ldots, C_d$ is called an {order-$\alpha$ digital $(t,d)$-sequence over $\mathbb{F}_2$}.
\end{defin}

Order-1 (i.e., $\alpha=1$) in the above definitions refers to the classical definition of digital $(t,n,d)$-nets and $(t,d)$-sequences by Niederreiter~\cite{N87} (see also \cite{N92} and the introductory books \cite{DP10,LP14}). The extension to higher order $\alpha \ge 1$ was introduced by Dick~\cite{D07,D08}. For a geometrical interpretation of higher order digital nets and sequences we refer to \cite{N92,DP10,LP14} for the classical case $\alpha=1$ and \cite{DB09} for general $\alpha$. Roughly speaking, the definitions imply that special intervals or unions of intervals of prescribed volume contain the correct share of points with respect to a perfect uniform distribution.

Definition~\ref{def_net} shows that if $\P_{d, 2^n}$ is an order-$\alpha$ digital $(t,n,d)$-net, then for any $t \le t' \le \alpha n$, $\P_{d, 2^n}$ is also an order-$\alpha$ digital $(t',n,d)$-net. An analogous result also applies to higher order digital sequences.

We point out that a digital net can be an order-$\alpha$ digital $(t,n,d)$-net over $\mathbb{F}_2$ and at the same time an order-$\alpha'$ digital $(t',n,d)$-net over $\mathbb{F}_2$ for $\alpha'\not = \alpha$. The quality parameter $t$ may depend on $\alpha$ (i.e., $t=t(\alpha)$). The same is true for digital sequences. In particular, \cite[Theorem~4.10]{D08} implies that an order-$\alpha$ digital $(t,n,d)$-net is an order-$\alpha'$ digital $(t',n,d)$-net for all $1 \le \alpha' \le \alpha$ with 
\begin{equation*}%\label{eq_t_tprime}
t' = \lceil t \alpha'/\alpha \rceil \le t.
\end{equation*}
The same result applies to order $\alpha$ digital $(t,d)$-sequences which are also order $\alpha'$ digital $(t',d)$-sequences for $1 \le \alpha' \le \alpha$ and $t'$ as above. In other words, $t(\alpha') = \lceil t(\alpha) \alpha'/\alpha \rceil$ for all $1 \le \alpha' \le \alpha$. For more details, see \cite{D07,D08} or \cite[Chapter~15]{DP10}.

\paragraph{Explicit constructions of order-2 digital sequences.}%\label{sec_exp_constr}

Explicit constructions of order-$\alpha$ digital nets and sequences have been provided by Dick~\cite{D07,D08}. For our purposes, it suffices to restrict ourselves to the case $\alpha=2$. 

In order to construct a sequence in dimension $d$ we start with $2 d$ (in general, $\alpha d$) generating matrices $C_1, \ldots, C_{2 d}$ of a digital sequence over $\mathbb{F}_2$. Let $\vec{c}_{j,k}$ denote the $k$-th row of $C_j$. We now define matrices $E_1,\ldots, E_d$, where the $k$-th row of $E_j$ is denoted by $\vec{e}_{j,k}$, in the following way. For all $j \in [d]$, $u \in \N_0$, and $v \in \{1,2\}$, set
\begin{equation*}
\vec{e}_{j,2 u + v} = \vec{c}_{2 (j-1) + v, u+1}.
\end{equation*}
We illustrate the construction for $d=1$. In this case we have
\[C_1=\left(\begin{array}{c} 
             \vec{c}_{1,1}\\
             \vec{c}_{1,2}\\
             \vdots 
            \end{array}\right), \ C_2=\left(\begin{array}{c} 
             \vec{c}_{2,1}\\
             \vec{c}_{2,2}\\
             \vdots 
            \end{array}\right) \ \Rightarrow \ 
           E_1=\left(\begin{array}{c} 
             \vec{c}_{1,1}\\
             \vec{c}_{2,1}\\
             \vec{c}_{1,2}\\
             \vec{c}_{2,2}\\
             \vdots 
            \end{array}\right).\]
This procedure is called {interlacing} (with interlacing factor $\alpha=2$).

\begin{rem}\rm
Recall that for generating matrices of digital sequences we demand $c_{j,k,\ell}=0$ for all $k > K(\ell)$. Let $E_j = (e_{j,k,\ell})_{k, \ell \in \mathbb{N}}$. Then the construction yields $e_{j,k,\ell} = 0$ for all $k > 2 K(\ell)$. 
\end{rem}

From \cite[Theorems~4.11 and~4.12]{D07} we obtain the following result.

\begin{prop}
If $C_1,\ldots,C_{2d} \in\mathbb{F}_2^{\N \times \N}$ generate an order-1 digital $(t',2d)$-sequence over $\mathbb{F}_2$, then $E_1,\ldots,E_d \in\mathbb{F}_2^{\N \times \N}$ generate an order-2 digital $(t,d)$-sequence over $\mathbb{F}_2$ with $t = 2 t' + d$.
\end{prop}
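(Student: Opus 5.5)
The plan is to reduce the sequence statement to a statement about nets and then verify the linear-independence condition of Definition~\ref{def_net} directly. Fix $n > t/2$ with $t = 2t'+d$. We must show that the upper left $2n \times n$ submatrices of $E_1,\ldots,E_d$ generate an order-2 digital $(t,n,d)$-net. By the definition of interlacing, the first $2n$ rows of $E_j$ are the first $n$ rows of $C_{2j-1}$ and of $C_{2j}$, alternately. Restricting to the first $n$ columns, these are the rows of the $n\times n$ submatrices $C_{2j-1,n\times n}, C_{2j,n\times n}$. By hypothesis (and since $n > t/2 \ge t'$), these generate an order-1 digital $(t',n,2d)$-net.

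Now take row indices $1 \le i_{j,\nu_j} < \cdots < i_{j,1} \le 2n$ for $E_j$ with $\sum_j (i_{j,1}+i_{j,2}) \le 2n - t$; in the case $\nu_j=1$ only $i_{j,1}$ counts. Each index has the form $i = 2u+v$ with $v\in\{1,2\}$, and the row $\vec e_{j,i}$ is row $u+1$ of $C_{2(j-1)+v}$. Let $m_{j,v}$ be the largest $u+1$ occurring among the selected indices with parity class $v$ (or $0$ if there is none). The selected rows form a subset of the first $m_{j,1}$ rows of $C_{2j-1}$ together with the first $m_{j,2}$ rows of $C_{2j}$. By the order-1 $(t',n,2d)$-net property, all these rows are linearly independent as soon as $\sum_{j}(m_{j,1}+m_{j,2}) \le n - t'$. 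It therefore suffices to prove the key inequality
\[
\sum_{j=1}^d (m_{j,1}+m_{j,2}) \le \frac12\sum_{j=1}^d (i_{j,1}+i_{j,2}) + \frac d2 .
\]
Granting it, we get $\sum_j (m_{j,1}+m_{j,2}) \le (2n-t)/2 + d/2 = n - t'$, as required.

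For the key inequality we argue per coordinate $j$. The largest index $i_{j,1}$ has some parity class $v$, and $m_{j,v} = (i_{j,1}-v)/2+1 \le (i_{j,1}+1)/2$. If the other class is nonempty, its maximum index is some $i_{j,l}$ with $l\ge 2$, so it is at most $i_{j,2}$, and $m_{j,\bar v} \le (i_{j,2}+1)/2$. In the worst case this gives $m_{j,1}+m_{j,2} \le (i_{j,1}+i_{j,2})/2 + 1$, which only yields $t = 2t'+2d$. The refinement uses parity. The two maxima lie in different parity classes, so one of them has $v=2$, and for that one $m = i/2 \le (i+1)/2 - 1/2$. This saves $1/2$ per coordinate and gives $(i_{j,1}+i_{j,2})/2 + 1/2$. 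If only one class occurs, the bound $(i_{j,1}+1)/2$ is at most what is needed. Summing over $j$ gives the key inequality.

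The main obstacle is exactly this bookkeeping step: getting the additive constant $d$ rather than $2d$ by tracking the parity of $i_{j,1}$ and $i_{j,2}$. One also has to check that rows beyond index $n$ of the $C$'s never appear, which holds because $i\le 2n$ forces $u+1\le n$. Alternatively, one can simply cite \cite[Theorems~4.11 and~4.12]{D07}, where this computation is carried out for general $\alpha$.
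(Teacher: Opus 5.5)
Your proof is correct and is essentially the standard argument. The paper gives no proof of its own; it only cites \cite[Theorems~4.11 and~4.12]{D07}, and those rest on the same verification you give. You send the selected rows of $E_j$ to initial row segments of $C_{2j-1}$ and $C_{2j}$, then bound the sum of the largest row indices. The parity saving of $1/2$ per coordinate is what yields $t=2t'+d$, the case $\alpha=2$ of the general $t=\alpha t'+d\binom{\alpha}{2}$.
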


Examples of explicit constructions of suitable generating matrices of order-1 sequences over $\mathbb{F}_2$ were obtained by %\todo{auch Faure? P: Waere dafuer F: Nein, Faure braucht h\"ohere Basis, geht nicht mit $\mathbb{F}_2$ }
Sobol'~\cite{S67}, Niederreiter~\cite{N87,N92}, Niederreiter and Xing~\cite{NX96}, and others. An overview is presented in \cite[Chapter~8]{DP10}. Any of these constructions is sufficient for our purpose; however, for completeness, we briefly describe a special case of a construction of generating matrices by Tezuka~\cite{T93}, which is a generalization of Sobol's~\cite{S67} and Niederreiter's~\cite{N87} constructions, respectively. 

We explain how to construct the entries $c_{j,k,\ell} \in \mathbb{F}_2$ of the generating matrices $C_j = (c_{j,k,\ell})_{k,\ell \ge 1}$ for $j \in [d']$ (for our purpose, $d'=2d$). To this end, for $j \in [d']$, choose a polynomial $p_j \in \mathbb{F}_2[x]$ as the $j$-th irreducible polynomial in a list of irreducible polynomials over $\mathbb{F}_2$. The list is assumed to be  sorted in increasing order according to the degrees $e_\ell = \deg(p_\ell)$ of the polynomials $p_\ell$, i.e., $e_1 \le e_2 \le \cdots \le e_{d'}$ (the ordering of polynomials with the same degree is irrelevant). 

Let $j \in [d']$ and $k \in \mathbb{N}$. Take $i-1$ and $z$ to be the main term and the remainder when we divide $k-1$ by $e_j$, respectively, so that $k-1  = (i-1) e_j + z$, with $z \in \{0,\ldots,e_j-1\}$. Now consider the Laurent series expansion %\nicolas{[Seems like a better notation would be $\bbF_2[[x^{-1}]]$, but maybe it would be best to just avoid this notation entirely here.]}
\begin{equation*}
\frac{x^{e_j-z-1}}{p_j(x)^i} = \sum_{\ell =1}^\infty a_\ell(i,j,z) x^{-\ell}. %\in \mathbb{F}_2((x^{-1})).
\end{equation*}
For $\ell \in \mathbb{N}$ we set 
\begin{equation*}%\label{def_sob_mat}
c_{j,k,\ell} = a_\ell(i,j,z).
\end{equation*}
Every digital sequence with generating matrices $C_j = (c_{j,k,\ell})_{k,\ell \ge 1}$ for $j \in [d']$ obtained this way is a special instance of a Sobol' sequence, which in turn is a special instance of a so-called generalized Niederreiter sequence (see \cite[Equation~(3)]{T93}). Note that in the construction above we always have $c_{j,k,\ell}=0$ for $k > \ell$. The quality parameter of these sequences is known to be $t' = \sum_{j=1}^{d'} (e_j-1)$, see \cite[Chapter~4.5]{N92} for the case of Niederreiter sequences.

\begin{rem}\rm
Let $C_1,\ldots,C_{2d}$ be $\mathbb{N}\times \mathbb{N}$-matrices constructed according to Tezuka's method as described above. Let $E_1,\ldots,E_d$ be obtained from an order-2 interlacing of $C_1,\ldots,C_{2 d}$. Then we have $e_{j,k,\ell}=0$ for all $k > 2 \ell$, where $e_{j,k,\ell}$ is the entry in row $k$ and column $\ell$ of the matrix $E_j$.  The matrices $E_1,\ldots,E_d$ generate an order-2 digital $(t,d)$-sequence over $\mathbb{F}_2$ with $t=2 \sum_{j=1}^{2 d} (e_j-1) + d$. 
\end{rem}

\paragraph{Haar coefficients of the local discrepancy of order-2 nets and sequences.} Here we collect bounds on the Haar coefficients of the local discrepancy function of (digitally shifted) order-2 nets and sequences. By Lemma~\ref{le1} these bounds may be used to obtain bounds for the corresponding errors of the tensor Faber functions \eqref{def:cjk}.

\begin{lem} \label{haarcoeffdignets}
Let $\P_{d,2^n}$ be a digitally shifted order-2 digital $(t,n,d)$-net over $\mathbb{F}_2$. Let $\bsj\in\N_{-1}^d$ with $|\bsj|_1+t/2\leq n$ and $\bsk\in\D_{\bsj}$. Then
\[ |\mu_{\bsj,\bsk}(D_{\P_{d,2^n}})| \lesssim 2^{-2n+t}(2n-t-2|\bsj|_1)^{d-1}. \]
\end{lem}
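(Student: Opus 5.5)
This bound is the known Haar coefficient estimate for (digitally shifted) order-2 digital nets from the finite-point-set literature. I would prove it by the standard Walsh-function approach of Dick and Hinrichs–Markhasin–Oettershagen–Ullrich. The plan has three steps.

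\emph{Step 1: expansion.} Expand the local discrepancy function $D_{\P_{d,2^n}}$ in the Walsh basis. For a digitally shifted digital net, the empirical Walsh averages are $\frac{1}{2^n}\sum_{\bsx\in\P}{\rm wal}_{\bsk}(\bsx)={\rm wal}_{\bsk}(\boldsymbol{\sigma})\,\mathbf 1_{\bsk\in\mathcal{D}^\perp}$, where $\mathcal D^\perp$ is the dual net $\{\bsk\neq\bszero: C_1^\top\vec k_1+\cdots+C_d^\top\vec k_d=\vec 0\}$. This gives
\[
\mu_{\bsj,\bsk}(D_{\P_{d,2^n}})=\sum_{\boldsymbol{\ell}\in\mathcal D^\perp}{\rm wal}_{\boldsymbol\ell}(\boldsymbol\sigma)\,\mu_{\bsj,\bsk}(J_{\boldsymbol\ell}),
\qquad J_{\boldsymbol\ell}(\bsx)=\prod_{i=1}^d\int_0^{x_i}{\rm wal}_{\ell_i}(y)\rd y,
\]
up to the terms with some $\ell_i=0$, which are handled by the usual product structure. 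The digital shift only contributes factors of modulus $1$, so I bound term by term in absolute value.

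\emph{Step 2: the univariate factor.} Use the explicit Walsh expansion of $\int_0^x{\rm wal}_\ell$ and compute its Haar coefficients. For $j\ge0$, the quantity $\int_0^1 h_{j,k}(x)\int_0^x{\rm wal}_\ell(y)\rd y\rd x$ vanishes unless $\ell<2^{j+1}$ up to a short tail, in the sense of \cite{HMOU}. More precisely, one gets: $\ell=0$ gives size $2^{-2j}$; $2^j\le\ell<2^{j+1}$ gives size $2^{-2j}$; and $\ell\ge2^{j+1}$ gives a nonzero contribution only if $\ell=2^{a_1}+2^{a_2}+\cdots$ with the digits below level $j+1$ fixed by $k$. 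That last contribution has size $\lesssim 2^{-j}2^{-a_1-a_2}$, i.e.\ it decays like $2^{-\mu_2(\ell)}$, where $\mu_2$ is Dick's order-2 weight (sum of the two largest digit positions). This is exactly where order $2$ enters.

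\emph{Step 3: counting, the main obstacle.} I must bound $\sum_{\boldsymbol\ell\in\mathcal D^\perp,\ \text{compatible with }(\bsj,\bsk)}\prod_i(\text{factor}_i)$. The net property (Definition~\ref{def_net} with $\alpha=2$) says that every $\boldsymbol\ell\in\mathcal D^\perp$ has $\mu_2(\boldsymbol\ell)=\sum_i\mu_2(\ell_i)>2n-t$. The compatibility constraint fixes the low digits of each $\ell_i$ relative to $j_i$, leaving only the top two digit positions $a_{i,1}>a_{i,2}$ free beyond level $j_i$.

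Summing $2^{-\sum_i(a_{i,1}+a_{i,2})}$ over such tuples with total weight exceeding $2n-t$ is the delicate part. I would group the tuples by total weight $w=\sum_i\mu_2(\ell_i)>2n-t$. For each $w$, I would use the linear independence in the net definition, which implies that for fixed top digits at most one (or boundedly many, since $2^{0}$) dual vector matches. The number of compositions of the excess $w-2|\bsj|_1$ into $d$ nonnegative parts is $\lesssim(w-2|\bsj|_1)^{d-1}$, after accounting for the prefactors $2^{-j_i}$ that normalize against $2^{-2j_i}$. Summing the geometric series over $w\ge 2n-t$ then gives $2^{-2n+t}(2n-t-2|\bsj|_1)^{d-1}$. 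The hypothesis $|\bsj|_1+t/2\le n$ guarantees that the excess is nonnegative, so the polynomial factor makes sense. I expect the careful bookkeeping of the univariate Haar–Walsh coefficients to be routine but lengthy, and I would follow \cite[Section~5]{HMOU}, adapting their finite-net proof verbatim, since only net (not sequence) properties are used here.
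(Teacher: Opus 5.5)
Your plan is the standard Walsh-function argument behind the result the paper cites. The paper gives no proof of its own: it invokes \cite[Lemma~5.9]{M15} and \cite[Lemma~3.3]{DHMP16}, and remarks that the digital shift changes nothing. That remark corresponds to your observation that the shift only inserts unimodular factors $\mathrm{wal}_{\boldsymbol\ell}(\boldsymbol\sigma)$. The model to adapt is \cite{M15} or \cite{DHMP16}, not \cite{HMOU}, which only quotes the bound. The skeleton (dual-net weight $\mu_2(\boldsymbol\ell)>2n-t$, at most one dual vector per top-digit configuration, composition count) is sound. However, your Step~2 is wrong in its details, and this also distorts Step~3.

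\emph{Step~2.} Write $\ell=2^{a_1}+\ell'$ with $\ell'<2^{a_1}$ and use Fine's formula $\int_0^x\mathrm{wal}_\ell=2^{-a_1-2}\bigl(\mathrm{wal}_{\ell'}(x)-\sum_{r\ge1}2^{-r}\mathrm{wal}_{2^{a_1+r}+\ell}(x)\bigr)$ together with $h_{j,k}=2^{-j}\sum_{m<2^j}\mathrm{wal}_m(k2^{-j})\,\mathrm{wal}_{2^j+m}$. For $j\ge0$ this gives the following.
\begin{itemize}
\item For \emph{every} $0\le\ell<2^j$ the coefficient is $-2^{-2j-2}\mathrm{wal}_\ell(k2^{-j})$.
\item For $2^j\le\ell<2^{j+1}$ the coefficient \emph{vanishes}.
\item For $\ell\ge2^{j+1}$ it is nonzero iff the second-highest binary digit of $\ell$ sits at position $j$. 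That is, $\ell=2^{a_1}+2^j+\ell''$ with $\ell''<2^j$ \emph{arbitrary}. Its modulus is then $2^{-a_1-j-2}=2^{-\mu_2(\ell)}$, not $2^{-j}2^{-a_1-a_2}$, and $k$ enters only through a sign.
\end{itemize}
For $j=-1$ only $\ell=0$ and $\ell=2^a$ survive.

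\emph{Step~3.} Consequently the low digits are \emph{not} fixed by $(j,k)$, and uniqueness must come from the net. Fix a type in each coordinate: low ($\ell_i<2^{j_i}$), or top digit $a_{i,1}>j_i$ with second digit $j_i$, or $\ell_i\in\{0,2^{a_i}\}$ when $j_i=-1$. The remaining free digits multiply rows $1,\dots,j_i$ of $C_i$ in the dual equation. These $|\bsj|_+$ rows are linearly independent by Definition~\ref{def_net}, because $\sum_{i:\,j_i\ge1}(2j_i-1)\le 2|\bsj|_+\le 2n-t$. This is where $|\bsj|_1+t/2\le n$ is really needed, not merely to make the excess nonnegative: for $|\bsj|_+>n$ these rows are necessarily dependent and the count per type blows up.

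\emph{Conclusion.} With at most one dual vector per type, each term of size $\lesssim 2^{-\mu_2(\boldsymbol\ell)}$, and $\mu_2(\boldsymbol\ell)>2n-t$, your composition count and Lemma~\ref{lem:A_1/2} give $2^{-2n+t}(2n-t-2|\bsj|_++1)^{d-1}$. This is $\lesssim 2^{-2n+t}(2n-t-2|\bsj|_1)^{d-1}$ whenever the latter base is at least $1$, which is the only case used in the proof of Theorem~\ref{thm1}.
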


\begin{proof}
The lemma is a slight generalization of \cite[Lemma~5.9]{M15}. The result was originally proved for order-$2$ digital $(t,n,d)$-nets. The extension to digitally shifted order $2$ digital $(t,n,d)$-nets follows by almost exactly the same arguments as the proof of \cite[Lemma~5.9]{M15}. The statement in the present form can also be found in \cite[Lemma~3.3]{DHMP16}. 
\end{proof}

The next lemma gives a bound for order-2 digital sequences. The dyadic boxes $I_{\bsj, \bsk} \coloneqq \prod_{i=1}^d I_{j_i, k_i}$ appearing below are given with $I_{j, k}$ as in \eqref{eq:Ijk}. In the following, let $\ld N \coloneqq \log_2 N$ denote the logarithm to the base $2$.

\begin{lem}\label{le131416}
Let $\S_d$ be an order-2 digital $(t,d)$-sequence over $\mathbb{F}_2$ with generating matrices $E_i=(e_{i,k,\ell})_{k,\ell \ge 1}$ for which $e_{i,k,\ell}=0$ for all $k > 2\ell$ and for all $i\in [d]$, and let $\S_{d, N}$ be the point set consisting of the first $N$ elements of $\S_d$. 
\begin{enumerate}
\item Let $|\bsj|_1+t/2\geq \lfloor \ld N \rfloor$ and set $\bbA_\bsj \coloneqq \{\bsk \in \bbD_\bsj: I_{\bsj, \bsk} \cap \S_{d, N} \neq \emptyset\}$. %if $I_{\bsj,\bsk}$ contains points of $\S_{d, N}$,
Then
$$
|\mu_{\bsj,\bsk}(D_{\S_{d,N}})| \lesssim  \begin{cases}
    2^{t/2}\,N^{-1} 2^{-|\bsj|_1} & \text{if } \bsk \in \bbA_\bsj, \\
    2^{-2|\bsj|_1} & \text{if } \bsk \in \bbD_\bsj \setminus \bbA_\bsj,
\end{cases}
$$
where the second case occurs for $\#(\bbD_\bsj \setminus \bbA_\bsj) \geq 2^{|\bsj|_1} - N$ indices $\bsk$.
%At least $2^{|\bsj|_1}-N$ such intervals contain no points of $\S_{d,N}$ and in such cases we have 
%\[|\mu_{\bsj,\bsk}(D_{\S_{d,N}})| \lesssim 2^{-2|\bsj|_1}.\]
\item On the other hand, if $|\bsj|_1+t/2< \lfloor \ld N \rfloor$, so that $n_\gamma \le |\bsj|_1+t/2< n_{\gamma+1}$ for some $\gamma \in \{0, 1, \dots, u-1\}$, where $N=2^{n_u}+\cdots+2^{n_1}$ with $n_u > \cdots >n_1 \ge 0$ and also setting $n_0 \coloneqq 0$, then 
\[
|\mu_{\bsj,\bsk}(D_{\S_{d,N}})| \lesssim 2^t\,N^{-1} \left(2^{-|\bsj|_1} +  (2n_{\gamma+1} - t - 2|\bsj|_1)^{d-1}2^{-n_{\gamma+1}}\right).
\]
\end{enumerate}
\end{lem}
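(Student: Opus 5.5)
We decompose the first $N$ points along the binary expansion $N=2^{n_u}+\cdots+2^{n_1}$. The set $\S_{d,N}$ is the disjoint union of blocks $\P^{(\gamma')}$, $\gamma'=1,\dots,u$. Block $\P^{(\gamma')}$ consists of the points with indices $k$ in $[2^{n_u}+\dots+2^{n_{\gamma'+1}},\,2^{n_u}+\dots+2^{n_{\gamma'}})$, and has $2^{n_{\gamma'}}$ points. For such $k$ the low $n_{\gamma'}$ digits run over all of $\mathbb F_2^{n_{\gamma'}}$, while the high digits are fixed. Hence $E_i\vec k$ equals the image of the low digits under the first $n_{\gamma'}$ columns of $E_i$, plus a fixed vector. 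By the assumption $e_{i,k,\ell}=0$ for $k>2\ell$, the first $n_{\gamma'}$ columns are supported in the first $2n_{\gamma'}$ rows, i.e.\ in $C_{i,2n_{\gamma'}\times n_{\gamma'}}$. The fixed vector acts as a digital shift. So each block is a digitally shifted order-2 digital $(t,n_{\gamma'},d)$-net whenever $n_{\gamma'}>t/2$. Since the local discrepancy is linear in the empirical measure,
$$N D_{\S_{d,N}}=\sum_{\gamma'=1}^u 2^{n_{\gamma'}} D_{\P^{(\gamma')}},\qquad \mu_{\bsj,\bsk}(D_{\S_{d,N}})=\frac1N\sum_{\gamma'}2^{n_{\gamma'}}\mu_{\bsj,\bsk}(D_{\P^{(\gamma')}}).$$

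\textbf{Part 1.} We compute the Haar coefficient directly rather than through blocks. Write $\mu_{\bsj,\bsk}=\frac1N\sum_{\bsx\in\S_{d,N}}\int \mathbf 1_{[\bsx,\mathbf 1)}(\bsy)h_{\bsj,\bsk}(\bsy)\,d\bsy-\int \prod y_i\, h_{\bsj,\bsk}(\bsy)\,d\bsy$.
\begin{itemize}
\item The volume term is $\lesssim 2^{-2|\bsj|_1}$ by a one-dimensional computation in each coordinate.
\item For a point $\bsx$ outside the box $I_{\bsj,\bsk}$, the product structure gives a factor $\int_{I_{j_i,k_i}}\mathbf 1_{[x_i,1)}h=0$ in some coordinate. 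So only points inside the box contribute, each by $\lesssim 2^{-2|\bsj|_1}$.
\end{itemize}
If $\bsk\notin\bbA_\bsj$, this leaves $|\mu|\lesssim 2^{-2|\bsj|_1}$. The boxes are disjoint, so at most $N$ of them meet the point set, which gives the count $\#(\bbD_\bsj\setminus\bbA_\bsj)\ge 2^{|\bsj|_1}-N$.

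If $\bsk\in\bbA_\bsj$, we bound the number of points of $\S_{d,N}$ in the box. Each block with $n_{\gamma'}>t/2$ is in particular an order-1 net with parameter $\lceil t/2\rceil$. So its dyadic box of volume $2^{-|\bsj|_1}$ holds at most $\max(1,2^{n_{\gamma'}+t/2-|\bsj|_1})$ points. Summing over blocks, the contribution from in-box points is
$$\frac1N\#(\S_{d,N}\cap I_{\bsj,\bsk})\,2^{-2|\bsj|_1}\lesssim 2^{t/2}N^{-1}2^{-|\bsj|_1}.$$
This uses that $|\bsj|_1+t/2\ge\lfloor\ld N\rfloor$, so the count is $\lesssim 2^{t/2}$ up to the number of blocks.

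\textbf{Main obstacle.} The number of blocks $u$ could be as large as $\log N$, and this must be absorbed. The plan is to use the geometric growth of $2^{n_{\gamma'}}$: only blocks with $n_{\gamma'}\ge|\bsj|_1-t/2$ contribute more than one point. Alternatively, we treat the union of the top blocks as a single net of $2^{n_u+1}$ points. Small blocks (those with $n_{\gamma'}\le t/2$) contain at most $2^{t/2+1}$ points in total.

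\textbf{Part 2.} Split the block sum at $\gamma$.
\begin{itemize}
\item \emph{Blocks $\gamma'\ge\gamma+1$.} Here $|\bsj|_1+t/2<n_{\gamma'}$, so Lemma~\ref{haarcoeffdignets} applies and gives
$$2^{n_{\gamma'}}|\mu(D_{\P^{(\gamma')}})|\lesssim 2^{-n_{\gamma'}+t}(2n_{\gamma'}-t-2|\bsj|_1)^{d-1}.$$
The sum over $\gamma'\ge\gamma+1$ is dominated geometrically by its first term $\gamma'=\gamma+1$, because the $n_{\gamma'}$ are strictly increasing and $m^{d-1}2^{-m}$ decays. This yields the term $2^tN^{-1}(2n_{\gamma+1}-t-2|\bsj|_1)^{d-1}2^{-n_{\gamma+1}}$.
\item \emph{Blocks $\gamma'\le\gamma$.} These have $n_{\gamma'}\le|\bsj|_1+t/2$, and we apply the Part~1-type bound. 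Their union has $\le 2^{n_\gamma+1}$ points, each contributing $\lesssim 2^{-2|\bsj|_1}$ if it lies in the box, and the volume part is $2^{n_{\gamma'}}2^{-2|\bsj|_1}$. Using $n_{\gamma'}\le |\bsj|_1+t/2$ and summing geometrically gives $\lesssim 2^{t/2}2^{-|\bsj|_1}$. After dividing by $N$ this is $\le 2^tN^{-1}2^{-|\bsj|_1}$.
\end{itemize}
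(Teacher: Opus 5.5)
Your skeleton is sound. The top-down splitting of the indices (each block has its high-order digits fixed, which together with $e_{i,k,\ell}=0$ for $k>2\ell$ makes it a digitally shifted order-2 net) is right, as is the identity $N D_{\S_{d,N}}=\sum_{\gamma'}2^{n_{\gamma'}}D_{\P^{(\gamma')}}$. For the blocks $\gamma'\ge\gamma+1$, Lemma~\ref{haarcoeffdignets} plus a geometric sum (as in Lemma~\ref{lem:A_1/2}, with $n_{\gamma+1}-t/2-|\bsj|_1\ge 1/2$) is exactly how the paper re-derives Item~2 inside the proof of Theorem~\ref{thm1}; the lemma itself is only cited from \cite{DHMP16}.

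The gap is in how you handle points inside $I_{\bsj,\bsk}$. A point $\bsx\in I_{\bsj,\bsk}$ contributes $N^{-1}\prod_i\int_{x_i}^1 h_{j_i,k_i}(y)\rd y$. Each factor has modulus up to $2^{-j_i-1}$; the midpoint of $I_{j_i,k_i}$ gives exactly $-2^{-j_i-1}$. So the per-point weight is of order $2^{-|\bsj|_1}$, not $2^{-2|\bsj|_1}$ as you use throughout. The real content of the lemma is therefore a bound $\lesssim 2^{t/2}$ on the number of relevant points in $I_{\bsj,\bsk}$, and you never establish it.
\begin{itemize}
\item In Part~1, your blockwise count $\max(1,2^{n_{\gamma'}+t/2-|\bsj|_1})$ does not follow from the quality parameter. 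A digital $(\lceil t/2\rceil,n,d)$-net may put $2^{\lceil t/2\rceil}$ points into arbitrarily small dyadic boxes; the correct bound is $\max(2^{\lceil t/2\rceil},2^{n-|\bsj|_1})$. Summing this over up to $u\approx\ld N$ blocks leaves the spurious factor $u$ that you flag as the main obstacle but do not remove.
\item In Part~2, the low-block step (``their union has $\le 2^{n_\gamma+1}$ points, each contributing $\lesssim 2^{-2|\bsj|_1}$'') gives, with the correct weight, only $N^{-1}2^{n_\gamma+1-|\bsj|_1}$. This is of size $2^{t/2}N^{-1}$ when $n_\gamma$ is close to $|\bsj|_1+t/2$. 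The factor $2^{-|\bsj|_1}$ is lost, and it is precisely this factor that produces the sum $\sum 2^{|\bsj|_1(1-r)q'}$ in $\Sigma_{1,2}$.
\end{itemize}

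The missing step is the one hinted at in your ``alternatively'', but it must be an inclusion, not a union of blocks.

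\textbf{Part~1.} Since $N<2^{n_u+1}$, we have $\S_{d,N}\subseteq\S_{d,2^{n_u+1}}$. If $n_u+1\le t/2$, every count is trivially at most $N<2^{t/2}$. Otherwise $\S_{d,2^{n_u+1}}$ is an order-2 digital $(t,n_u+1,d)$-net, hence an order-1 digital $(\lceil t/2\rceil,n_u+1,d)$-net. Because $|\bsj|_1\ge n_u-t/2$, each $I_{\bsj,\bsk}$ then contains at most $\max(2^{\lceil t/2\rceil},2^{n_u+1-|\bsj|_1})\le 2^{t/2+1}$ points of $\S_{d,N}$. The volume term is absorbed via $2^{-2|\bsj|_1}\le 2^{t/2+1}N^{-1}2^{-|\bsj|_1}$.

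\textbf{Part~2.} Blocks $1,\dots,\gamma$ are the points with indices in $[N-M,N)$, where $M=2^{n_\gamma}+\dots+2^{n_1}$. Since $2^{n_{\gamma+1}}\mid N-M$ and $M<2^{n_\gamma+1}\le 2^{n_{\gamma+1}}$, these points are the digital shift of $\S_{d,M}$ by the digit vectors $E_i\vec{(N-M)}$. They therefore lie inside a digitally shifted order-2 $(t,n_\gamma+1,d)$-net, or trivially $M<2^{t/2}$ if $n_\gamma+1\le t/2$. A digital shift permutes the dyadic boxes of a given shape, so the same count applies, now using $n_\gamma\le|\bsj|_1+t/2$. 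The low blocks then contribute
\[
\lesssim N^{-1}\bigl(2^{t/2}2^{-|\bsj|_1}+M2^{-2|\bsj|_1}\bigr)\lesssim 2^{t/2}N^{-1}2^{-|\bsj|_1}.
\]
This is Item~1 applied to a shifted initial segment, which is also how the paper itself invokes the lemma for $\widetilde{\P}_{d,M}$ in the proof of Theorem~\ref{thm1}.
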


\begin{proof}
This is \cite[Equations (13), (14), and (16)]{DHMP16}. The statement in the present form can also be found in \cite[Lemma~4.5]{DHMP}.
\end{proof}

\section{Upper bound for order-2 digital sequences}\label{sec:main}

For order-$2$ digital $(t,m,d)$-nets $\P_{d,2^m}$ over $\FF_2$ we know from \cite[Theorem~5.3]{HMOU} that for $p,q \in [1,\infty]$ and $r \in (1/p,2)$ we have 
$$
\wce(\P_{d,2^m}, S^r_{p, q}B(\bbT^d)) \lesssim_{p,q,r,d} 2^{rt/2} \frac{(\log N)^{(d-1)(1-1/q)}}{N^r},\qquad \mbox{where}\ N=2^m.
$$
For an infinite sequence we then have the following result, which constitutes the main result of this work. 

\begin{thm}\label{thm1}
Let $\S_d$ be an order-2 digital $(t,d)$-sequence over $\mathbb{F}_2$ with generating matrices $E_i=(e_{i,k,\ell})_{k,\ell \ge 1}$ for which $e_{i,k,\ell}=0$ for all $k > 2 \ell$ and for all $i \in [d]$. For $N \in \N$ let $\S_{d,N}$ denote the point set consisting of the $N$ initial terms of $\S_d$. Then for $p,q \in [1,\infty]$ and $1/p < r <2$ we have for all $N \in \bbN\setminus\{1\}$, 
\begin{equation*}
\wce(\S_{d,N}, S^r_{p, q}B(\bbT^d)) \lesssim_{p,q,r,d} 2^{(r+1-1/p)t/2} \times \left\{ 
\begin{array}{ll}
(\log N)^{(d-1)(1-1/q)}N^{-\min(r,1)} & \mbox{if } r \not= 1,\\[0.5em]
(\log N)^{d(1-1/q)}N^{-1} & \mbox{if } r = 1.%\\[0.5em]
%\frac{(\log N)^{(d-1)(1-1/q)}}{N} & \mbox{if } r>1.
\end{array}\right. 
\end{equation*}
\end{thm}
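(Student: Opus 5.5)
The plan is to start from Lemma~\ref{est:err:net} and replace each Faber error $c_{\bsj,\bsk}(\S_{d,N})$ by a Haar coefficient of the local discrepancy. By Lemma~\ref{le1}, for $\bsj\in\bbN_0^d$ we have $|c_{\bsj,\bsk}|=2^{|\bsj|_1}|\mu_{\bsj,\bsk}(D_{\S_{d,N}})|$. For $\bsj$ with some entries equal to $-1$, the same identity holds with $|\bsj|_+$, applied to the projection onto the coordinates in $\vv(\bsj)$. Such a projection of an order-2 digital $(t,d)$-sequence is again an order-2 digital $(t,s)$-sequence with the same row condition $e_{i,k,\ell}=0$ for $k>2\ell$. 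So it suffices to treat $\bsj\in\bbN_0^{s}$ for each $s\le d$, and the lower-dimensional terms contribute at most the same order, with $d$ replaced by $s\le d$. The quantity to bound is therefore
\[
\Big[\sum_{\bsj} 2^{-|\bsj|_1(r-1/p)q'}\,2^{|\bsj|_1 q'}\Big(\sum_{\bsk\in\bbD_\bsj}|\mu_{\bsj,\bsk}|^{p'}\Big)^{q'/p'}\Big]^{1/q'} .
\]
We group the $\bsj$ by level $m=|\bsj|_1$; there are $\lesssim m^{d-1}$ indices $\bsj$ at each level.

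\textbf{Large levels, $m+t/2\ge\lfloor \ld N\rfloor$.} We use part 1 of Lemma~\ref{le131416}. At most $N$ indices $\bsk$ lie in $\bbA_\bsj$, and each contributes $|\mu_{\bsj,\bsk}|\lesssim 2^{t/2}N^{-1}2^{-m}$. The remaining at most $2^m$ indices contribute $\lesssim 2^{-2m}$ each. This gives
\[
2^{m}\Big(\sum_{\bsk}|\mu_{\bsj,\bsk}|^{p'}\Big)^{1/p'}\lesssim 2^{t/2}N^{1/p'-1}+2^{-m/p}.
\]
Multiplying by the weight $2^{-m(r-1/p)}$ and summing geometrically over $m\ge \ld N-t/2$, with the factor $m^{d-1}$ inside the $\ell_{q'}$ sum, yields order $2^{(r-1/p)t/2}\,2^{t/2}N^{-r}(\log N)^{(d-1)/q'}$. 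This is at most the claimed bound, since $r\ge\min(r,1)$.

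\textbf{Small levels, $m+t/2<\lfloor \ld N\rfloor$.} Here we use part 2 of Lemma~\ref{le131416}, summing over all $2^m$ indices $\bsk$:
\[
2^m\Big(\sum_{\bsk}|\mu_{\bsj,\bsk}|^{p'}\Big)^{1/p'}\lesssim 2^{t}N^{-1}2^{m/p'}\Big(1+(2n_{\gamma+1}-t-2m)^{d-1}2^{m-n_{\gamma+1}}\Big).
\]
With the weight $2^{-m(r-1/p)}$, the first term becomes $2^tN^{-1}2^{m(1-r)}$. Summed in $\ell_{q'}$ over $m\lesssim \log N$, with multiplicity $m^{d-1}$, this gives three regimes. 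For $r>1$ the sum is dominated by small $m$ and yields $N^{-1}$. For $r<1$ it is dominated by $m\approx\log N$ and yields $N^{-r}(\log N)^{(d-1)/q'}$. For $r=1$ every level contributes equally and yields $N^{-1}(\log N)^{(d-1)/q'+1/q'}=N^{-1}(\log N)^{d/q'}$. These three cases produce exactly the trichotomy in the statement. Note that for $r>1$ the $\ell_{q'}$ norm of $m^{d-1}2^{-m(r-1)}$ is bounded, and the stated factor $(\log N)^{(d-1)(1-1/q)}$ is only an upper bound there.

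\textbf{Main obstacle: the second term of part 2.} This term is $2^tN^{-1}(2n_{\gamma+1}-t-2m)^{d-1}2^{m(2-r)-n_{\gamma+1}}$, and it depends on which block $n_\gamma\le m+t/2<n_{\gamma+1}$ of the binary expansion of $N$ contains $m$. Within one block, $2^{m(2-r)}$ grows because $r<2$, so the block sum is dominated by its top level $m\approx n_{\gamma+1}-t/2$. There the polynomial factor is $O(1)$, since the polynomial factor weighs against a geometric decay with rate $2-r>0$, and the block contributes $\lesssim 2^{t}2^{-(2-r)t/2}N^{-1}2^{(1-r)n_{\gamma+1}}$. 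Summing over blocks $\gamma$ then gives a geometric series in the $n_{\gamma+1}$. For $r>1$ it is bounded by the smallest term. For $r<1$ it is bounded by $2^{(1-r)n_u}\le N^{1-r}$. For $r=1$ each of the at most $\log N$ blocks contributes $O(1)$, so the $\ell_{q'}$ sum is $\lesssim(\log N)^{1/q'}$.

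The $\ell_{q'}$ summation must be done carefully: the multiplicity $m^{d-1}$ has to be combined with the block structure so that the log power matches the statement. I would handle this by the crude bound $m^{d-1}\le(\log N)^{d-1}$ inside each block. The $t$-powers from all cases must also be tracked and checked to be at most $2^{(r+1-1/p)t/2}$, possibly after absorbing terms using $2^{t}\le 2^{(r+1-1/p)t/2}\cdot$const only when $t$ is bounded. If the exponents do not match, this step would need refinement, for instance by treating levels with $m$ close to $\ld N - t/2$ separately via part 1.
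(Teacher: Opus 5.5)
Your overall architecture is the paper's: Lemma~\ref{est:err:net}, conversion to Haar coefficients via Lemma~\ref{le1}, projection for the lower-dimensional $\bsj$, the split at $|\bsj|_1+t/2=\lfloor\ld N\rfloor$, part~1 of Lemma~\ref{le131416} on large levels, and block-wise geometric summation with the $r<1$, $r=1$, $r>1$ trichotomy on small levels. Your $N$- and $\log N$-dependence comes out right. The gap is the $t$-dependence, which is part of the statement. The implied constant may depend only on $p,q,r,d$, so no factor $2^{ct}$ can be absorbed ``when $t$ is bounded''.

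On small levels you use part~2 of Lemma~\ref{le131416} verbatim, and it carries $2^t$ in front of $2^{-|\bsj|_1}$. After weighting, that first term gives $2^tN^{-1}2^{m(1-r)}$ per $\bsj$. Summed, this is $2^tN^{-1}$ for $r>1$ and $2^tN^{-1}(\log N)^{d/q'}$ for $r=1$. For $r<1$ it is at best $2^{(1+r)t/2}N^{-r}(\log N)^{(d-1)/q'}$. Compared with the claimed $2^{(r+1-1/p)t/2}$, this is too large by a factor $2^{t/(2p)}$ when $r\le 1$, and by $2^{(1+1/p-r)t/2}$ when $1<r<1+1/p$. So for every $p<\infty$ your argument misses the stated exponent. (It does suffice for Corollary~\ref{cor:perL2}, whose factor is only $2^t$.)

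Your suggested repair, treating levels near $\ld N-t/2$ via part~1, does not help. For $r>1$ the offending contribution is dominated by the smallest levels $m\approx 0$. Conversely, the term you flagged as the main obstacle is harmless: it produces $2^t2^{-(2-r)t/2}=2^{rt/2}\le 2^{(r+1-1/p)t/2}$.

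The missing step is to sharpen part~2 of Lemma~\ref{le131416} before summing.

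\begin{itemize}
\item For $n_\gamma\le m+t/2<n_{\gamma+1}$, use Lemma~\ref{le:partS} to write $N D_{\S_{d,N}} = M D_{\S_{d,M}}+\sum_{\kappa>\gamma}2^{n_\kappa}D_{\P_{d,2^{n_\kappa}}}$ with $M=2^{n_1}+\cdots+2^{n_\gamma}$; the first term is absent if $\gamma=0$.
\item Since $m+t/2\ge n_\gamma=\lfloor\ld M\rfloor$, part~1 of Lemma~\ref{le131416} applies to the initial segment $\S_{d,M}$. It gives $|\mu_{\bsj,\bsk}(D_{\S_{d,M}})|\lesssim 2^{t/2}M^{-1}2^{-m}$ for all $\bsk$. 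The empty-box bound $2^{-2m}$ is of the same size, because $M<2^{m+t/2+1}$.
\item The digitally shifted nets with $\kappa>\gamma$ are handled by Lemma~\ref{haarcoeffdignets} and summed over $\kappa$ with Lemma~\ref{lem:A_1/2}.
\end{itemize}

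Together these give
\[
|\mu_{\bsj,\bsk}(D_{\S_{d,N}})|\lesssim \frac1N\Bigl(2^{t/2}2^{-m}+2^t(2n_{\gamma+1}-t-2m)^{d-1}2^{-n_{\gamma+1}}\Bigr).
\]
Now the first term carries only $2^{t/2}\le 2^{(r+1-1/p)t/2}$, using $r>1/p$, and the rest of your computation goes through unchanged.
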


The proof of this result will be given in Section~\ref{sec:proof}. But first, the result should be classified in comparison to lower bounds and a corollary regarding our initial problem measure, the periodic $L_2$-discrepancy, should be formulated.

\begin{rem}\rm 
It is known that for $p,q \in [1,\infty]$ and any $r>1/p$ we have for every $N$-element set $\calP_{d,N}$ in $[0,1)^d$, 
$$
\wce(\P_{d,N}, S^r_{p, q}B(\bbT^d)) \gtrsim \frac{(\log N)^{(d-1)(1-1/q)}}{N^r}.
$$ This lower bound even holds for general cubature rules, not only for QMC rules, see \cite[Theorem~4.1]{DU15} or \cite{T90} for the case of Sobolev spaces; the result is also stated in \cite[Theorem~5.5]{HMOU}. This shows that the order of magnitude in $N$ in Theorem~\ref{thm1} is best possible if $r \in (1/p,1)$, which is non-empty for $p>1$. For $r\ge 1$ the upper bound in Theorem~\ref{thm1} is, apart from the logarithm factors, of order $\mathcal{O}(1/N)$. Considering infinite sequences, also this order of magnitude cannot be improved for all $N$ and not even for infinitely many $N$ in an arithmetic progression. Indeed, the following result, based on an argument by Sobol' \cite{S98}, was stated in \cite[Proposition~1]{HKKN12}. If for some $M \in \bbN$, any $M$-point QMC rule has a positive worst-case error, which is bounded below away from zero, i.e.,
\begin{equation*}%\label{eq:assump_Sobol}
\inf_{\substack{\P_{d,M} \subseteq [0,1)^d\\ \#\P_{d,M}=M}}  \wce(\P_{d,M}, S^r_{p, q}B(\bbT^d)) \ge c_M > 0,
\end{equation*}
where $c_M$ is allowed to depend on $M$, then the sequence 
$(\wce(\S_{d,N_0+n M}, S^r_{p, q}B(\bbT^d)))_{n\ge 0}$
cannot converge to zero faster than $\mathcal{O}(1/N)$, with $N$ of the form $N = N_0 + n M$, for any fixed $N_0 \in \bbN$. It is easy to see that the required assumption is true for any $M\in \bbN$ in the space $S^r_{p, q}B(\bbT^d)$. Thus, the order of magnitude in Theorem~\ref{thm1} is also optimal in the case $r \in [1,2)$ (up to logarithmic factors). For $r=1$ and $p=q=2$ the obtained order of magnitude is optimal even with regard to logarithmic factors (see Corollary~\ref{cor:perL2} below).
\end{rem}

By \eqref{eq:wce_lper}, if in Theorem~\ref{thm1} the parameters $p,q$ are set to 2 and $r$ to 1, then we obtain the desired best possible estimate for the periodic $L_2$-discrepancy/diaphony of order-2 digital sequences.

\begin{cor}\label{cor:perL2}
Let $\S_d$ be an order-2 digital $(t,d)$-sequence over $\mathbb{F}_2$ with generating matrices $E_i=(e_{i,k,\ell})_{k,\ell \ge 1}$ for which $e_{i,k,\ell}=0$ for all $k > 2 \ell$ and for all $i \in [d]$. Then we have
\begin{equation*}
L_{2,N}^{{\rm per}}(\S_d) \lesssim_d 2^t \ \frac{(\log N)^{d/2}}{N} \qquad \mbox{for all } N \in \N \setminus\{1\}, 
\end{equation*}
and this is best possible according to Pro{\u\i}nov's lower bound~\eqref{dia:lbd}.
\end{cor}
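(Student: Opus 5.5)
The corollary follows from Theorem~\ref{thm1} by specializing to $p=q=2$ and $r=1$. This choice is admissible because $1/p=1/2<1<2$. In this case $S^1_{2,2}B(\bbT^d)=H^1_{\rm mix}(\bbT^d)$, as stated in Section~\ref{sec:besov}. The relation \eqref{eq:wce_lper} then gives
\[
L_{2,N}^{\rm per}(\S_d)=L_{2,N}^{\rm per}(\S_{d,N})\asymp_d \wce(\S_{d,N},H^1_{\rm mix}(\bbT^d)),
\]
with implied constants depending only on $d$.

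Next I apply the case $r=1$ of Theorem~\ref{thm1}. The exponent of $2^t$ there is $(r+1-1/p)t/2=(1+1-1/2)t/2=3t/4\le t$, so the factor is at most $2^t$. The logarithmic exponent is $d(1-1/q)=d/2$. Hence for all $N\ge 2$,
\[
\wce(\S_{d,N},H^1_{\rm mix}(\bbT^d))\lesssim_d 2^{3t/4}\,\frac{(\log N)^{d/2}}{N}\le 2^t\,\frac{(\log N)^{d/2}}{N}.
\]
The dependence on $p,q,r$ in the constant disappears because these parameters are now fixed numbers. Combining this with the equivalence above yields the claimed upper bound.

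For optimality, the lower bound \eqref{dia:lbd} says that every sequence satisfies $L_{2,N}^{\rm per}(\S_d)\ge c_d(\log N)^{d/2}/N$ for infinitely many $N$. So the order $(\log N)^{d/2}/N$, including the logarithmic power, cannot be improved uniformly in $N$.

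The only point that needs care is the identification $S^1_{2,2}B=H^1_{\rm mix}$ with equivalent norms. An equivalence of norms only changes worst-case errors by constants depending on $d$, so it costs nothing here. I do not expect any real obstacle: all the difficulty sits in Theorem~\ref{thm1} itself.
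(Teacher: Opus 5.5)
Your proposal is correct and follows the paper's own argument: set $p=q=2$, $r=1$ in Theorem~\ref{thm1}, pass to the periodic $L_2$-discrepancy via \eqref{eq:wce_lper}, and obtain optimality from \eqref{dia:lbd}. Your computation that the factor is actually $2^{3t/4}\le 2^t$ is a correct, slightly sharper reading of the theorem.
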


\begin{rem}\rm
It is well known that order-2 digital sequences exhibit the best possible order for the usual star (anchored) $L_2$-discrepancy and even for the star $L_p$-discrepancy for any $p \in (1,\infty)$; see \cite{DHMP16}. The same holds true for the extreme (unanchored) $L_p$-discrepancy. In this case, the upper bound results from the aforementioned bound for the star $L_p$-discrepancy from \cite{DHMP16} together with the fact that the extreme $L_p$-discrepancy is dominated---up to a multiplicative factor that depends only on $p$ and $d$---by the star $L_p$-discrepancy (see \cite[Corollary~5]{KP22b}), and the lower bound from \cite[Theorem~1]{KP22a}. Furthermore, order-2 digital sequences exhibit optimal rates for star discrepancy with respect to the exponential Orlicz norm, 
the BMO semi-norm, as well as the Sobolev, Besov, and Triebel-Lizorkin norms with dominating mixed smoothness, see \cite[Theorems~2.1, 2.2, 3.1 and 3.2]{DHMP}. The present Corollary~\ref{cor:perL2} guarantees the same behavior for the periodic $L_2$-discrepancy. This is further evidence of the universally excellent distribution properties of Dick's order-2 digital sequences with respect to various discrepancies and figures of merit.
\end{rem}

\section{The proof of Theorem~\ref{thm1}}\label{sec:proof}

Before we begin proving Theorem~\ref{thm1}, we gather some further auxiliary results.

\begin{lem}\label{le:partS}
Let $\S_d$ be an order-$2$ digital $(t,d)$-sequence over $\mathbb{F}_2$ with generating matrices $C_1,\ldots,C_d$, with $C_j = (c_{j,k,\ell})_{k, \ell \ge 1}$ for which $c_{i,k,\ell} = 0$ for all $k > 2 \ell$. Let $N \in \N$ with dyadic expansion $N=2^{n_u}+\cdots+2^{n_1}$, $n_u>\cdots>n_1\geq 0$. For $\gamma \in \{1, \ldots, u\}$ let
\begin{equation*}
\P_{d,2^{n_\gamma}} \coloneqq \{\boldsymbol{x}_{2^{n_1}+\cdots + 2^{n_{\gamma-1}}}, \boldsymbol{x}_{2^{n_1}+\cdots + 2^{n_{\gamma-1}} + 1}, \ldots, \boldsymbol{x}_{-1+2^{n_1} + \cdots + 2^{n_\gamma}}\},
\end{equation*}
where for $\gamma = 1$ we set $2^{n_1} + \cdots + 2^{n_{\gamma-1}} = 0$. Then the point set $\S_{d,N}$ consisting of the first $N$ elements of the sequence $\S_d$ is a union of $\P_{d,2^{n_\gamma}}$ for $\gamma \in \{1, \ldots, u\}$ and each $\P_{d,2^{n_\gamma}}$ is a digitally shifted order-$2$ digital $(t,n_\gamma,d)$-net over $\mathbb{F}_2$ with generating matrices $C_{1,2 n_\gamma \times n_\gamma},\ldots,C_{d,2 n_\gamma \times n_\gamma}$, i.e., the left upper $2 n_\gamma \times n_\gamma$-submatrices of $C_1,\ldots,C_d$. Furthermore, every shift vector only has a finite number of components different from zero.     
\end{lem}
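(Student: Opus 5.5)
The plan is to read everything off the binary digit vectors of the indices. Write $M_{\gamma}\coloneqq 2^{n_1}+\cdots+2^{n_\gamma}$ with $M_0\coloneqq0$. The blocks in the statement are the index ranges $[M_{\gamma-1},M_\gamma)$ for $\gamma=1,\dots,u$. These are disjoint and consecutive, and $M_u=N$, so $\S_{d,N}=\bigcup_{\gamma}\P_{d,2^{n_\gamma}}$ holds trivially. Each block has exactly $2^{n_\gamma}$ elements.

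For the net property I would fix $\gamma$ and write an index in the block as $k=M_{\gamma-1}+a$ with $a\in\{0,\dots,2^{n_\gamma}-1\}$. The aim is to show that $\vec k$ splits into a "free part" running through $\mathbb{F}_2^{n_\gamma}$ in the first $n_\gamma$ coordinates and a "fixed part" $\vec f$ in the coordinates $\ge n_\gamma+1$. Then $E_j\vec k=E_{j}^{(\le n_\gamma)}\vec a'+E_j\vec f$, where $E_j^{(\le n_\gamma)}$ denotes the first $n_\gamma$ columns. The condition $e_{j,k,\ell}=0$ for $k>2\ell$ says that these columns vanish below row $2n_\gamma$. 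Hence the first term is exactly the point generated by the left upper $2n_\gamma\times n_\gamma$ submatrix $E_{j,2n_\gamma\times n_\gamma}$, padded with zeros. The second term, $\vec\sigma_j\coloneqq E_j\vec f$, is a digital shift with only finitely many nonzero entries, because $\vec f$ has finite support and each column of $E_j$ has finite support. Finally, the sequence is an order-2 digital $(t,d)$-sequence. So for $n_\gamma>t/2$ the submatrices $E_{j,2n_\gamma\times n_\gamma}$ generate an order-2 $(t,n_\gamma,d)$-net. For $n_\gamma\le t/2$ the condition in Definition~\ref{def_net} is vacuous, since $t\ge 2n_\gamma$ allows the trivial choice.

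The main obstacle is the splitting step, because of carries. In the stated order (smallest powers first) the offset $M_{\gamma-1}$ is less than $2^{n_\gamma}$ but is not divisible by $2^{n_\gamma}$. Consequently $M_{\gamma-1}+a$ can carry into digit $n_\gamma$. The block index set is then $[M_{\gamma-1},2^{n_\gamma})\cup[2^{n_\gamma},2^{n_\gamma}+M_{\gamma-1})$. Its low $n_\gamma$ digits still run through $\mathbb{F}_2^{n_\gamma}$ exactly once, but digit $n_\gamma$ equals $[b<M_{\gamma-1}]$ for the low part $b$, so it is not constant. The points are therefore $E_{j}^{(\le n_\gamma)}\vec b+[b<M_{\gamma-1}]\,\vec e_{j,\cdot,n_\gamma+1}+\vec\sigma_j$, and this is not a single digital shift of a net in general. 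A test case is $N=3$, $\gamma=2$: the block is $\{\bsx_1,\bsx_2\}$, which is a shifted net only if the second column of $E_j$ vanishes in its relevant rows.

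I would therefore first try to verify the claim exactly as worded by checking whether the carry term is harmless. It is harmless when $M_{\gamma-1}=0$ or when the column $n_\gamma+1$ contributes trivially. If that fails, as I expect, I would prove the version the later arguments actually need. In that version the same sizes $2^{n_\gamma}$ are used, but the blocks are taken in the order where each offset is a multiple of $2^{n_\gamma}$, namely starting at $2^{n_u}+\cdots+2^{n_{\gamma+1}}$. There the splitting above is carry-free, with $\vec f$ equal to the digits of that offset, and the argument of the first two paragraphs goes through verbatim. Lemma~\ref{le131416} only uses that $\S_{d,N}$ is a disjoint union of digitally shifted order-2 $(t,n_\gamma,d)$-nets of sizes $2^{n_\gamma}$, so this version suffices.
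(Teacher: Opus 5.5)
Your proof of the carry-free version is correct. The paper's own proof is only a pointer to \cite{DHMP16}, and your argument is the standard one behind it:
\begin{itemize}
\item the low $n_\gamma$ digits of the index run freely;
\item the fixed high digits produce the finitely supported shift $E_j\vec f$;
\item $e_{j,k,\ell}=0$ for $k>2\ell$ confines the first $n_\gamma$ columns to the first $2n_\gamma$ rows.
\end{itemize}
Your objection to the wording is also justified. With the blocks listed smallest-first, as in the statement, the carry into digit $n_\gamma$ genuinely breaks the claim, and your test case $N=3$ settles it. Take $d=1$ and interlace the identity with the Pascal matrix $(\binom{\ell-1}{k-1}\bmod 2)_{k,\ell}$ (Tezuka's construction with $p_1=x$, $p_2=x+1$, so $t=1$ and $n_2=1>t/2$). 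This gives $\bsx_1=3/4$ and $\bsx_2=7/16$, whose digit vectors differ by $(1,0,1,1,0,\dots)$. By contrast, the two points of any digital shift of the net generated by $E_{1,2\times 1}=(1,1)^\top$ differ by $(1,1,0,\dots)$. So the lemma must be read with block offsets $2^{n_u}+\cdots+2^{n_{\gamma+1}}$, which is what you prove. A minor point: for $2n_\gamma<t$, Definition~\ref{def_net} does not apply at all, rather than being vacuous. Such blocks never enter Lemma~\ref{haarcoeffdignets}, which requires $|\bsj|_1+t/2\le n$, so this is harmless.

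The gap is in your final sentence. In this paper the lemma is not used through Lemma~\ref{le131416}, which is imported. It is used in the proof of Theorem~\ref{thm1}, in two ways:
\begin{itemize}
\item the large blocks $\P_{d,2^{n_\kappa}}$, $\kappa>\gamma$, must be shifted nets so that Lemma~\ref{haarcoeffdignets} applies;
\item the union $\widetilde{\P}_{d,M}$ of the small blocks is treated as the initial segment $\S_{d,M}$, so that Item~1 of Lemma~\ref{le131416} applies.
\end{itemize}
The smallest-first ordering delivers the second property but not the first. Your ordering delivers the first but turns $\widetilde{\P}_{d,M}$ into the tail $\{\bsx_{N-M},\dots,\bsx_{N-1}\}$, to which Item~1 does not literally apply.

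The repair is one more use of your splitting. Since $N-M$ is a multiple of $2^{n_{\gamma+1}}>M$, the tail is a digitally shifted copy of $\S_{d,M}$ with finitely supported shift. A digital shift permutes the boxes $I_{\bsj,\bsk}$ for each fixed $\bsj$. The bounds of Item~1 follow from two ingredients:
\begin{itemize}
\item the elementary estimate $|\mu_{\bsj,\bsk}(D_{\P})|\lesssim 2^{-|\bsj|_1}\#(\P\cap I_{\bsj,\bsk})/\#\P+2^{-2|\bsj|_1}$;
\item the fact that for $|\bsj|_1+t/2\ge\lfloor\ld M\rfloor$ each box $I_{\bsj,\bsk}$ contains $\lesssim 2^{t/2}$ points of $\S_{d,M}$.
\end{itemize}
Both are invariant under the shift. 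With this remark added, your corrected lemma does carry the rest of the argument.
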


\begin{proof}
The result is the first claim in \cite[Proof of Theorem~2.2]{DHMP16}. See there for a proof.   
\end{proof}

Finally, we need the following technical but elementary results. 

\begin{lem}\label{index_dim_red}
Let $r\in\N_0$ and $s\in\N$. Then
\[ \#\{(a_1,\ldots,a_s)\in\N_0^s:\; a_1 + \cdots + a_s = r\} \leq (r + 1)^{s-1}. \]
\end{lem}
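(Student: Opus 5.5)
The statement to prove is Lemma~\ref{index_dim_red}: the number of compositions of $r$ into $s$ nonnegative parts is at most $(r+1)^{s-1}$. The plan is a direct injection argument, with an induction on $s$ as a fallback.

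The key observation is that a tuple $(a_1,\ldots,a_s)\in\N_0^s$ with $a_1+\cdots+a_s=r$ is completely determined by its first $s-1$ entries, since $a_s = r-(a_1+\cdots+a_{s-1})$. Moreover each $a_i$ lies in $\{0,1,\ldots,r\}$, because all entries are nonnegative and they sum to $r$. Hence the map
\[
(a_1,\ldots,a_s)\mapsto (a_1,\ldots,a_{s-1})
\]
is an injection from the set in question into $\{0,\ldots,r\}^{s-1}$, a set of cardinality $(r+1)^{s-1}$. This gives the bound immediately.

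For $s=1$ the set is $\{(r)\}$, which has exactly one element, and the bound reads $(r+1)^0=1$. The injection argument covers this case with the convention that $\{0,\ldots,r\}^0$ is a one-point set.

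If a more explicit argument is preferred, one can instead induct on $s$. Summing over the value $a_s=m\in\{0,\ldots,r\}$, the count for $s$ is at most $\sum_{m=0}^{r}(r-m+1)^{s-2}\le (r+1)\,(r+1)^{s-2}$. I could also note that the exact count is $\binom{r+s-1}{s-1}$, but the bound is all that is needed.

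There is no real obstacle here; the only care needed is the degenerate case $s=1$.
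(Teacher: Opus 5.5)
Your proof is correct and complete. The map $(a_1,\ldots,a_s)\mapsto(a_1,\ldots,a_{s-1})$ is injective because the constraint determines $a_s$. Its image lies in $\{0,\ldots,r\}^{s-1}$ because the entries are nonnegative and sum to $r$. The case $s=1$ is handled correctly.

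The paper gives no argument of its own and only refers to \cite[Proof of Lemma~16.26]{DP10}, so there is no internal route to compare against. Your injection argument is a fully self-contained substitute for that citation.

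The other standard route is the one you mention in passing. Stars and bars gives the exact count $\binom{r+s-1}{s-1}=\prod_{i=1}^{s-1}\frac{r+i}{i}$, and each factor satisfies $\frac{r+i}{i}\le r+1$. That version shows the bound overestimates by roughly a factor $(s-1)!$ for large $r$. This loss is immaterial in the paper: the lemma is only used with $s\le d$, and constants depending on $d$ are absorbed into $\lesssim$. Your injection avoids binomial identities altogether and is the shortest way to get exactly the inequality the paper needs.
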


\begin{proof}
See \cite[Proof of Lemma~16.26]{DP10}.
\end{proof}

\begin{lem}\label{index_dim_red_log}
Let $K > 0$, $A>1$, and $q,s \ge 0$. Then we have
\[ \sum_{\substack{r \in \bbN_0 \\ r < K}} A^r (K-r)^q r^s \lesssim A^K\,K^s, \]
 where the implicit constant is independent of $K$.
\end{lem}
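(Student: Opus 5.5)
The plan is to compare the sum with the geometric series $\sum_{r\le K} A^r$, which is dominated by its last term. The polynomial factor $r^s$ is bounded crudely, and the factor $(K-r)^q$ is absorbed by the geometric decay measured from the top end.

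First, since $0\le r<K$ and $s\ge 0$, we have $r^s\le K^s$. It therefore suffices to show
\[ \sum_{\substack{r\in\bbN_0\\ r<K}} A^r (K-r)^q \lesssim A^K, \]
with an implied constant depending only on $A$ and $q$.

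Next, set $x\coloneqq K-r$. As $r$ runs over the nonnegative integers below $K$, $x$ runs over the values $K-\lfloor K\rfloor+ \ell$ (or $K-\lceil K\rceil+1+\ell$ when $K$ is an integer) for $\ell\in\bbN_0$. These form a set of positive reals in $(0,\infty)$ whose consecutive members differ by exactly $1$, and the smallest member lies in $(0,1]$. Writing $A^r=A^K A^{-x}$, the sum becomes
\[ A^K\sum_{x} x^q A^{-x}. \]

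Enumerate the admissible $x$ as $x_\ell=x_0+\ell$ with $x_0\in(0,1]$. Then $x_\ell\le \ell+1$ and $A^{-x_\ell}\le A^{-\ell}$, using $A>1$ and $x_\ell\ge \ell$. Hence
\[ \sum_x x^qA^{-x}\le \sum_{\ell=0}^\infty (\ell+1)^q A^{-\ell}, \]
and this series converges by the ratio test because $A>1$. Its value is a constant depending only on $A$ and $q$, and in particular not on $K$. Combining this with the first step gives the claim, with implied constant $\sum_{\ell\ge0}(\ell+1)^qA^{-\ell}$.

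The only point needing care is that $K$ need not be an integer, so the shifted variable $x$ does not run over the integers. The enumeration $x_\ell=x_0+\ell$ with $x_0\in(0,1]$ handles this uniformly: it gives $x_\ell\ge\ell$, and hence $A^{-x_\ell}\le A^{-\ell}$, for every $K>0$. In the degenerate case $q=0$ we use the convention $0^0=1$, although $x>0$ means it never arises. There is no real obstacle; the argument is an elementary geometric-series comparison.
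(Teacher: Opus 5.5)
Your proof is correct and is essentially the paper's own approach. The paper only defers to Markhasin's Lemma~5.2 in [M15], whose argument is this same reindexing $r\mapsto K-r$ followed by comparison with the convergent series $\sum_{\ell\ge0}(\ell+1)^qA^{-\ell}$. Your bound $r^s\le K^s$ and your enumeration $x_\ell=x_0+\ell$ with $x_0\in(0,1]$ supply exactly the ``simple modification'' the paper alludes to: the extra factor $r^s$ and a possibly non-integer $K$.
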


\begin{proof}
The statement is a simple modification of \cite[Lemma~5.2]{M15} with essentially the same proof.
\end{proof}

\begin{lem}\label{le4}
For $\alpha >0$, $d, t \in \bbN$ and $N \in \bbN \setminus \{1\}$ we have
$$\sum_{\bsj \in \bbN_{0}^d \atop |\bsj|_1+t/2\geq \lfloor\ld N\rfloor} 2^{- \alpha |\bsj|_1} \lesssim  2^{\alpha t/2} \frac{(\log N)^{d-1}}{N^{\alpha}},$$
with an implicit constant independent of $N$ or $t$.
\end{lem}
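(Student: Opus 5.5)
We group the multi-indices $\bsj$ by their total level $\ell = |\bsj|_1$ and use the counting bound of Lemma~\ref{index_dim_red}. Put $L \coloneqq \lfloor \ld N \rfloor \ge 1$, which is valid since $N \ge 2$. The condition $|\bsj|_1 + t/2 \ge L$ is equivalent to $\ell \ge L_0 \coloneqq \max(0, \lceil L - t/2 \rceil)$. For each $\ell \in \bbN_0$, Lemma~\ref{index_dim_red} gives
\[
\#\{\bsj \in \bbN_0^d : |\bsj|_1 = \ell\} \le (\ell+1)^{d-1},
\]
so the sum in question is at most
\[
\sum_{\ell \ge L_0} 2^{-\alpha \ell} (\ell+1)^{d-1}.
\]

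We bound this tail by a geometric-type series. Substitute $\ell = L_0 + m$ with $m \in \bbN_0$ and use $(L_0 + m + 1)^{d-1} \le 2^{d-1} \bigl( (L_0+1)^{d-1} + m^{d-1} \bigr)$. The sum is then
\[
\lesssim_{d} 2^{-\alpha L_0} \sum_{m \ge 0} 2^{-\alpha m} \bigl( (L_0+1)^{d-1} + m^{d-1} \bigr) \lesssim_{\alpha,d} 2^{-\alpha L_0} (L_0+1)^{d-1},
\]
because $\sum_m 2^{-\alpha m} m^{d-1}$ is a finite constant depending only on $\alpha$ and $d$.

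It remains to convert this into the stated bound. Since $L_0 \ge L - t/2$, we have $2^{-\alpha L_0} \le 2^{\alpha t/2} 2^{-\alpha L}$. Moreover $2^{L} > N/2$, so $2^{-\alpha L} < 2^{\alpha} N^{-\alpha}$. For the polynomial factor, $L_0 \le \max(0, L)$ gives $L_0 + 1 \le L + 1 \le 2L \le 2\ld N \lesssim \log N$, where $L \ge 1$ uses $N \ge 2$. Combining these yields
\[
\sum_{\bsj \in \bbN_{0}^d,\ |\bsj|_1+t/2\geq L} 2^{- \alpha |\bsj|_1} \lesssim_{\alpha, d} 2^{\alpha t/2} \frac{(\log N)^{d-1}}{N^{\alpha}},
\]
with a constant that does not depend on $N$ or $t$.

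The only delicate point is making the constant independent of $t$. When $t/2 \ge L$, the lower limit is $L_0 = 0$ and the sum is simply a constant. In that case the right-hand side is still at least a constant, because $2^{\alpha t/2} N^{-\alpha} \ge 2^{\alpha L} N^{-\alpha} > 2^{-\alpha}$ and $(\log N)^{d-1} \ge (\log 2)^{d-1}$. So this case is covered automatically by the same chain of estimates, and no separate treatment is needed.
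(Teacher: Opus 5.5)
Your proposal is correct and follows essentially the same route as the paper: group the $\bsj$ by level $\ell = |\bsj|_1$, bound the number of $\bsj$ at each level by $(\ell+1)^{d-1}$ via Lemma~\ref{index_dim_red}, and estimate the tail $\sum_{\ell \ge \lfloor \ld N\rfloor - t/2} (\ell+1)^{d-1} 2^{-\alpha \ell}$. The paper asserts this final tail estimate without detail. Your index shift and the bounds $2^{-\alpha L_0} \le 2^{\alpha t/2}\, 2^{\alpha} N^{-\alpha}$ and $L_0+1 \le 2\ld N$ supply it correctly, with a constant depending only on $\alpha$ and $d$.
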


\begin{proof}
We have, with Lemma~\ref{index_dim_red},
\begin{eqnarray*}
\sum_{\bsj \in \bbN_{0}^d \atop |\bsj|_1+t/2\geq \lfloor\ld N\rfloor} 2^{- \alpha |\bsj|_1}  & = & \sum_{\ell \ge \lfloor\ld N\rfloor - t/2} \frac{1}{2^{\ell \alpha}} \sum_{\bsj \in \bbN_0 \atop |\bsj|_1=\ell} 1 \\
& \le & \sum_{\ell \ge \lfloor\ld N\rfloor - t/2} \frac{(\ell+1)^{d-1}}{2^{\ell \alpha}} \lesssim  2^{\alpha t/2} \frac{(\log N)^{d-1}}{N^{\alpha}}.
\end{eqnarray*}
\end{proof}

The following lemma is similar to \cite[Lemma~13.24]{DP10}.

\begin{lem} \label{lem:A_1/2}
    Let $A \geq 1/2$, $s \in \bbR$ and $b > 1$. Then
    $$
    \sum_{k=0}^\infty \frac{(A+k)^s}{b^k} \lesssim A^s
    $$
    with an implicit constant independent of $A$.
\end{lem}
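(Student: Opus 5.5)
We have to prove: for $A\ge 1/2$, $s\in\mathbb{R}$, $b>1$, $\sum_{k\ge0}(A+k)^s b^{-k}\lesssim A^s$, with the implicit constant depending only on $s$ and $b$. The plan is to compare each summand to the $k=0$ term, factoring out $A^s$:
$$\sum_{k=0}^\infty \frac{(A+k)^s}{b^k} = A^s \sum_{k=0}^\infty \frac{(1+k/A)^s}{b^k}.$$
It then suffices to bound the remaining series uniformly in $A\ge 1/2$. We split into the cases $s\le 0$ and $s>0$.

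\emph{Case $s\le 0$.} Since $1+k/A\ge 1$, we have $(1+k/A)^s\le 1$. The series is therefore at most $\sum_k b^{-k}=b/(b-1)$, a constant independent of $A$.

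\emph{Case $s>0$.} Because $A\ge 1/2$, we have $k/A\le 2k$, so $(1+k/A)^s\le (1+2k)^s$. The series is then bounded by $\sum_k (1+2k)^s b^{-k}$. This converges for $b>1$, since polynomial growth is dominated by geometric decay (e.g.\ by the ratio test), and its value depends only on $s$ and $b$.

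Combining the two cases gives the claim with constant $C_{s,b}=\max\bigl(b/(b-1),\sum_k(1+2k)^s b^{-k}\bigr)$, which is independent of $A$. The only point requiring care is uniformity in $A$. The lower bound $A\ge 1/2$ is used precisely to control the ratio $k/A$ when $s>0$; for $s\le 0$ the argument needs no constraint on $A$ beyond positivity. I do not expect any real obstacle.
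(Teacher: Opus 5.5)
Your proof is correct and follows essentially the paper's route: use $A\ge 1/2$ to get $A+k\le(2k+1)A$, factor out $A^s$, and bound what remains by the convergent series $\sum_k(2k+1)^s b^{-k}$. Your separate case $s\le 0$ is genuinely needed: the paper's one-line argument raises $A+k\le(2k+1)A$ to the power $s$, and that inequality reverses for $s<0$. So your case split covers what the paper's proof only justifies for $s\ge 0$, which is the case actually used there, with $s=d-1$.
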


\begin{proof}
    Since $A \geq 1/2$ it holds \peter{that} $A+k \leq (2k+1) A$ for all $k \in \bbN_0$, and consequently
    $$
    \sum_{k=0}^\infty \frac{(A+k)^s}{b^k} \leq \left(\sum_{k=0}^\infty \frac{(2k+1)^s}{b^k}\right) A^s.
    $$
    The infinite sum converges to a finite value, proving the claim.
\end{proof}

We are now in the position to prove our main result, Theorem~\ref{thm1}.

\begin{proof}[Proof of Theorem~\ref{thm1}]
We consider only the case $p,q>1$. For $p=1$ or $q=1$, the result is obtained in the same way after the usual adjustments. Let $\S_d$ be an order-$2$ digital $(t,d)$-sequence over $\mathbb{F}_2$ with generating matrices $C_1,\ldots,C_d$, with $C_j = (c_{j,k,\ell})_{k, \ell \ge 1}$ for which $c_{\peter{j},k,\ell} = 0$ for all $k > 2 \ell$. Let $N \in \N$ with dyadic expansion $N=2^{n_u}+\cdots+2^{n_1}$ with $n_u>\cdots>n_1\geq 0$. 

Using Lemma~\ref{est:err:net} we need to analyze the right-hand side of the inequality
\begin{equation*}
\wce(\S_{d,N}, S^r_{p, q}B(\bbT^d))^{q'} \lesssim \sum_{\bsj \in \bbN_{-1}^d} 2^{-|\bsj|_1 (r-1/p) q'} \left(\sum_{\bsk \in \bbD_{\bsj}} |c_{\bsj, \bsk}(\S_{d,N})|^{p'}\right)^{q'/p'}.
\end{equation*}
We first deal with the partial sum
\begin{eqnarray*}
\Sigma_1 & \coloneqq &\sum_{\bsj \in \bbN_{0}^d} 2^{-|\bsj|_1 (r-1/p) q'} \left(\sum_{\bsk \in \bbD_{\bsj}} |c_{\bsj, \bsk}(\S_{d,N})|^{p'}\right)^{q'/p'}\\
& = & \sum_{\bsj \in \bbN_{0}^d} 2^{-|\bsj|_1 (r-1-1/p) q'} \left(\sum_{\bsk \in \bbD_{\bsj}} |\mu_{\bsj, \bsk}(D_{\S_{d,N}})|^{p'}\right)^{q'/p'},
\end{eqnarray*}
where we have applied Lemma~\ref{le1}.

Now we split up the latter sum according to the size $|\bsj|_1$ of $\bsj \in \bbN_0^d$. Let us first consider the case where $|\bsj|_1+t/2\geq \lfloor \ld N \rfloor$. We have
\begin{eqnarray*}
\Sigma_{1,1} & \coloneqq & \sum_{\bsj \in \bbN_{0}^d \atop |\bsj|_1+t/2\geq \lfloor \ld N \rfloor} 2^{-|\bsj|_1 (r-1-1/p) q'} \left(\sum_{\bsk \in \bbD_{\bsj}} |\mu_{\bsj, \bsk}(D_{\S_{d,N}})|^{p'}\right)^{q'/p'} \\    
& \lesssim & \sum_{\bsj \in \bbN_{0}^d \atop |\bsj|_1+t/2\geq \lfloor \ld N \rfloor} 2^{-|\bsj|_1 (r-1-1/p) q'} \left(\sum_{\bsk \in \bbA_{\bsj}} |\mu_{\bsj, \bsk}(D_{\S_{d,N}})|^{p'}\right)^{q'/p'}\\
& & + \sum_{\bsj \in \bbN_{0}^d \atop |\bsj|_1+t/2\geq \lfloor \ld N \rfloor} 2^{-|\bsj|_1 (r-1-1/p) q'} \left(\sum_{\bsk \in \bbD_{\bsj}\setminus \bbA_{\bsj}} |\mu_{\bsj, \bsk}(D_{\S_{d,N}})|^{p'}\right)^{q'/p'},
\end{eqnarray*}
where, like in Lemma~\ref{le131416}, $\bbA_{\bsj}$ denotes the set of indices $\bsk \in \bbD_{\bsj}$ for which $I_{\bsj,\bsk}$ contains a point of $\S_{d,N}$. Clearly $\# \bbA_{\bsj} \le N$ and $\#(\bbD_\bsj \setminus \bbA_\bsj) \leq 2^{|\bsj|_1}$. We now apply Lemma~\ref{le131416}, which yields
\begin{eqnarray*}
\Sigma_{1,1} & \lesssim & \sum_{\bsj \in \bbN_{0}^d \atop |\bsj|_1+t/2\geq \lfloor \ld N \rfloor} 2^{-|\bsj|_1 (r-1-1/p) q'} \left(N \frac{1}{N^{p'}} \frac{2^{p' t/2}}{2^{p'|\bsj|_1}}\right)^{q'/p'}\\
& & + \sum_{\bsj \in \bbN_{0}^d \atop |\bsj|_1+t/2\geq \lfloor \ld N \rfloor} 2^{-|\bsj|_1 (r-1-1/p) q'} \left( \frac{2^{|\bsj|_1}}{2^{2 p' |\bsj|_1}}  \right)^{q'/p'}\\
& = &  N^{q'/p'-q'} 2^{q' t/2} \sum_{\bsj \in \bbN_{0}^d \atop |\bsj|_1+t/2\geq \lfloor \ld N \rfloor} 2^{-|\bsj|_1 (r-1/p) q'} + \sum_{\bsj \in \bbN_{0}^d \atop |\bsj|_1+t/2\geq \lfloor \ld N \rfloor} 2^{-|\bsj|_1 r q'}. 
\end{eqnarray*}
Employing Lemma~\ref{le4}, we obtain 
\begin{eqnarray*}
\Sigma_{1,1} 
& \lesssim &  N^{q'/p'-q'} 2^{q' t/2} 2^{(r-1/p) q' t/2} \frac{(\log N)^{d-1}}{N^{(r-1/p) q'}} + 2^{r q' t/2} \frac{(\log N)^{d-1}}{N^{r q'}}\\  
& = & 2^{q'(r+1-1/p) t/2}  \frac{(\log N)^{d-1}}{N^{r q'}} + 2^{q' r t/2} \frac{(\log N)^{d-1}}{N^{r q'}}\\
& \lesssim & 2^{q' (r+1-1/p) t/2} \frac{(\log N)^{d-1}}{N^{r q'}},
\end{eqnarray*}
where we also used $r \leq r+1-1/p$.

We now turn to the more demanding case of $|\bsj|_1+t/2<\lfloor \ld N \rfloor$. Like in Lemma~\ref{le:partS} let $\S_{d,N}= \bigcup_{\gamma=1}^u \P_{d,2^{n_{\gamma}}}$, where each $\P_{d,2^{n_{\gamma}}}$ is a digitally shifted order-2 digital $(t,n_\gamma,d)$-net over $\mathbb{F}_2$. \peter{Now, fix} $\gamma\in \{0, 1,\ldots,u-1\}$ with
\[ 
n_{\gamma} \le |\bsj|_1+t/2 < n_{\gamma+1},
\]
where we set $n_0 \coloneqq 0$ (note that $n_u = \lfloor \ld N\rfloor$). %and $n_{u+1} \coloneqq \lfloor\ld N\rfloor $\nicolas{[But then $n_{u+1}=n_u$ so the case $n_u \leq ... < n_{u+1}$ does not occur]}.
We put $M\coloneqq 2^{n_\gamma}+\cdots+2^{n_1}$ for $\gamma > 0$ and $M\coloneqq0$ for $\gamma=0$. Furthermore, we write $\widetilde{\P}_{d,M} = \bigcup_{i=1}^{\gamma} \P_{d,2^{n_i}}$; in particular $\widetilde{\P}_{d,0} = \emptyset$. Note that we then have 
$ \S_{d,N}=\widetilde{\P}_{d,M} \cup \P_{d,2^{n_{\gamma+1}}} \cup\ldots\cup \P_{d,2^{n_{u}}}$.
\begin{itemize}
\item If $\gamma=0$, we use the estimate from Lemma~\ref{haarcoeffdignets} to get
\begin{equation}\label{eq:lem_disc_net_tvalue}
|\mu_{\bsj,\bsk}(D_{\P_{d,2^{n_\kappa}}})| \lesssim  \frac{(2n_\kappa - t - 2|\bsj|_1)^{d-1}}{2^{2n_\kappa - t}}, 
\end{equation}
for $\kappa\in \{1,\ldots , u\}$.
\item If $1\le \gamma < u$, we again use \eqref{eq:lem_disc_net_tvalue} for $\kappa \in \{\gamma+1,\ldots , u\}$, combined with Item~1 in Lemma~\ref{le131416}, from which we obtain
\begin{equation*}%\label{eq:lem_disc_net_131416}
|\mu_{\bsj,\bsk}(D_{\widetilde{\P}_{d,M}})| \lesssim  \frac{1}{M} \frac{1}{2^{|\bsj|_1-t/2}}, 
\end{equation*}
since $|\bsj|_1+t/2\ge \lfloor \ld M \rfloor$ and thus also $2^{-2|\bsj|_1} \lesssim M^{-1} 2^{-|\bsj|_1+t/2}$. 
%\item If $\mu=u$, we have $\S_{d,N}=\widetilde{\P}_{d,M}$, and can just use the bound~\eqref{eq:lem_disc_net_131416}.
\end{itemize}

Together with the linearity of the local discrepancy function and the triangle inequality this leads to 
\begin{align*}%\label{asdf}
 |\mu_{\bsj,\bsk}(D_{\S_{d,N}})| &\le \frac{M}{N} |\mu_{\bsj,\bsk}(D_{\widetilde{\P}_{d,M}})| + \frac{1}{N}\sum_{\kappa=\gamma+1}^u 2^{n_\kappa} |\mu_{\bsj,\bsk}(D_{\P_{d,2^{n_\kappa}}})|\nonumber\\
 &\lesssim \frac{1}{N}\left( \frac{2^{t/2}}{2^{|\bsj|_1}} + \sum_{\kappa=\gamma+1}^u  \frac{(2n_\kappa - t - 2|\bsj|_1)^{d-1}}{2^{n_\kappa-t}}\right)\nonumber\\
 &\lesssim \frac{1}{N} \left(\frac{2^{t/2}}{2^{|\bsj|_1}} + 2^t\sum_{k=0}^{\infty}  \frac{(2n_{\gamma+1} +2k - t - 2|\bsj|_1)^{d-1}}{2^{n_{\gamma+1}+k}}\right)\nonumber\\
 &\lesssim \frac{1}{N} \left(\frac{2^{t/2}}{2^{|\bsj|_1}} + 2^t \frac{(2n_{\gamma+1} - t - 2|\bsj|_1)^{d-1}}{2^{n_{\gamma+1}}}\right),
\end{align*}
where we used Lemma~\ref{lem:A_1/2} %\cite[Lemma~13.24]{DP10}
in the last step with $A = n_{\gamma+1}-t/2-|\bsj|_1 \geq 1/2$. %Note that the final estimate does not depend on $\bsk \in \bbD_\bsj$.}
We now use this bound to estimate the terms of $\Sigma_1$ for which $|\bsj|_1+t/2 < \lfloor\ld N\rfloor$. We have
\begin{eqnarray*}
\Sigma_{1,2} & \coloneqq & \sum_{\bsj \in \bbN_{0}^d \atop |\bsj|_1+t/2 < \lfloor\ld N\rfloor} 2^{-|\bsj|_1 (r-1-1/p) q'} \left(\sum_{\bsk \in \bbD_{\bsj}} |\mu_{\bsj, \bsk}(D_{\S_{d,N}})|^{p'}\right)^{q'/p'} \\ 
& = & \sum_{\gamma=0}^{u-1} \sum_{n_{\gamma} \le |\bsj|_1+t/2 < n_{\gamma+1}}  2^{-|\bsj|_1 (r-1-1/p) q'} \left(\sum_{\bsk \in \bbD_{\bsj}} |\mu_{\bsj, \bsk}(D_{\S_{d,N}})|^{p'}\right)^{q'/p'}\\
& \lesssim & \sum_{\gamma=0}^{u-1} \sum_{n_{\gamma} \le |\bsj|_1+t/2 < n_{\gamma+1}}  2^{-|\bsj|_1 (r-1-1/p) q'} 2^{|\bsj|_1 q'/p'} \frac{1}{N^{q'}} \left(\frac{2^{t/2}}{2^{|\bsj|_1}} + 2^t \frac{(2n_{\gamma+1} - t - 2|\bsj|_1)^{d-1}}{2^{n_{\gamma+1}}}\right)^{q'} \\
& = & \frac{1}{N^{q'}} \sum_{\gamma=0}^{u-1} \sum_{n_{\gamma} \le |\bsj|_1+t/2 < n_{\gamma+1}}  2^{-|\bsj|_1 (r-2) q'} \left(\frac{2^{t/2}}{2^{|\bsj|_1}} + 2^t \frac{(2n_{\gamma+1} - t - 2|\bsj|_1)^{d-1}}{2^{n_{\gamma+1}}}\right)^{q'}.
\end{eqnarray*}
For any $\beta\geq1$ the function $f(x)=x^{\beta}$ is convex, i.e. it holds \peter{that} $f(\lambda x +(1-\lambda) y) \le \lambda f(x)+(1-\lambda) f(y)$ for $\lambda \in [0,1]$. With $\lambda=\tfrac{1}{2}$ we obtain $$\left(\frac{x+y}{2}\right)^{\beta} =f(\tfrac{1}{2} x+ \tfrac{1}{2} y) \le \tfrac{1}{2} (f(x)+f(y)) = \frac{x^{\beta}+y^{\beta}}{2}$$ and thus $(x+y)^{\beta} \le 2^{\beta-1} (x^{\beta}+y^{\beta})$ for all $\beta \ge 1$. Hence, since $q' \ge 1$, we obtain
\begin{eqnarray*}
\Sigma_{1,2} & \lesssim & \frac{1}{N^{q'}} \sum_{\gamma=0}^{u-1} \sum_{n_{\gamma} \le |\bsj|_1+t/2 < n_{\gamma+1}}  2^{-|\bsj|_1 (r-2) q'} \left(\frac{2^{q't/2}}{2^{q' |\bsj|_1}} + 2^{q't} \frac{(2n_{\gamma+1} - t - 2|\bsj|_1)^{q'(d-1)}}{2^{q' n_{\gamma+1}}}\right)\\
& = & \frac{2^{q' t/2}}{N^{q'}} \sum_{\gamma=0}^{u-1} \sum_{n_{\gamma} \le |\bsj|_1+t/2 < n_{\gamma+1}}  2^{|\bsj|_1 (1-r) q'} \\
& & + \frac{2^{q' t}}{N^{q'}} \sum_{\gamma=0}^{u-1} \frac{1}{2^{q' n_{\gamma+1}}} \sum_{n_{\gamma} \le |\bsj|_1+t/2 < n_{\gamma+1}}  2^{|\bsj|_1 (2-r) q'} (2n_{\gamma+1} - t - 2|\bsj|_1)^{q'(d-1)}.
\end{eqnarray*}
For the first sum we have, using Lemma~\ref{index_dim_red},
\begin{eqnarray*}
\sum_{\gamma=0}^{u-1} \sum_{n_{\gamma} \le |\bsj|_1+t/2 < n_{\gamma+1}}  2^{|\bsj|_1 (1-r) q'} & \le &  \sum_{\gamma=0}^{u-1} \sum_{n_{\gamma} \le \ell+t/2 < n_{\gamma+1}} 2^{\ell(1-r) q'} (\ell+1)^{d-1}\\
& \lesssim & (\log N)^{d-1} \sum_{\ell < \lfloor\ld N\rfloor}  2^{\ell(1-r) q'}\\
& \lesssim & \left\{
\begin{array}{ll}
(\log N)^{d-1} N^{(1-r) q'} & \mbox{if } r<1,\\
(\log N)^d & \mbox{if } r=1,\\
(\log N)^{d-1} & \mbox{if } r>1.
\end{array}\right.
\end{eqnarray*}
For the second sum we have %\nicolas{[Second to last line may be omitted, covered by the ``$r^s$'' portion of Lem. 18]}
\begin{eqnarray*}
\lefteqn{\sum_{\gamma=0}^{u-1} \frac{1}{2^{q' n_{\gamma+1}}} \sum_{n_{\gamma} \le |\bsj|_1+t/2 < n_{\gamma+1}}  2^{|\bsj|_1 (2-r) q'} (2n_{\gamma+1} - t - 2|\bsj|_1)^{q'(d-1)}}\\
& \lesssim & \sum_{\gamma=0}^{u-1} \frac{1}{2^{q' n_{\gamma+1}}} \sum_{n_{\gamma} \le \ell+t/2 < n_{\gamma+1}}  2^{\ell (2-r) q'} (2n_{\gamma+1} - t - 2 \ell)^{q'(d-1)} (\ell+1)^{d-1}\\
%& \lesssim & (\log N)^{d-1} \sum_{\mu=0}^u \frac{1}{2^{q' n_{\mu+1}}} \sum_{n_{\mu} \le \ell+t/2 < n_{\mu+1}}  2^{\ell (2-r) q'} (2n_{\mu+1} - t - 2 \ell)^{q'(d-1)}\\
& \lesssim & (\log N)^{d-1}  \sum_{\gamma=0}^{u-1} \frac{1}{2^{q' n_{\gamma+1}}} 2^{(2-r)q'(n_{\gamma+1}-t/2)}, 
\end{eqnarray*}
where we used Lemmas~\ref{index_dim_red} and \ref{index_dim_red_log}. Hence%\nicolas{, since $2^{-(2-r) t/2} \leq 1$ by $r < 2$,}
%\begin{eqnarray*}
%\lefteqn{\sum_{\mu=0}^u \frac{1}{2^{q' n_{\mu+1}}} \sum_{n_{\mu} \le |\bsj|_1+t/2 < n_{\mu+1}}  2^{|\bsj|_1 (2-r) q'} (2n_{\mu+1} - t - 2|\bsj|_1)^{q'(d-1)}}\\
%& \lesssim & (\log N)^{d-1}  \sum_{\mu=0}^u 2^{(1-r)q'n_{\mu+1}}\\
%& \lesssim & \left\{
%\begin{array}{ll}
%(\log N)^{d-1} N^{(1-r) q'} & \mbox{if } r<1,\\
%(\log N)^d & \mbox{if } r=1,\\
%(\log N)^{d-1} & \mbox{if } r>1.
%\end{array}\right. 
%\end{eqnarray*}
\begin{align*}
    & \sum_{\gamma=0}^{u-1} \frac{1}{2^{q' n_{\gamma+1}}} \sum_{n_{\gamma} \le |\bsj|_1+t/2 < n_{\gamma+1}}  2^{|\bsj|_1 (2-r) q'} (2n_{\gamma+1} - t - 2|\bsj|_1)^{q'(d-1)}\\
    & ~~ \lesssim ~~ 2^{-(2-r)q' t/2} (\log N)^{d-1}  \sum_{\gamma=0}^{u-1} 2^{(1-r)q'n_{\gamma+1}}\\
    & ~~ \lesssim ~~ 2^{-(2-r)q' t/2} \times \left\{
\begin{array}{ll}
(\log N)^{d-1} N^{(1-r) q'} & \mbox{if } r<1,\\
(\log N)^d & \mbox{if } r=1,\\
(\log N)^{d-1} & \mbox{if } r>1.
\end{array}\right. 
\end{align*}
Altogether, we obtain 
\begin{eqnarray*}
\Sigma_{1,2} & \lesssim & \frac{2^{\max(1, r) q' t/2}}{N^{q'}} \times \left\{
\begin{array}{ll}
(\log N)^{d-1} N^{(1-r) q'} & \mbox{if } r < 1,\\
(\log N)^d & \mbox{if } r=1,\\
(\log N)^{d-1} & \mbox{if } r>1.
\end{array}\right. 
\end{eqnarray*}
Adding $\Sigma_{1,1}$ and $\Sigma_{1,2}$, noting that $1 < r+1-1/p$ and $r \leq r+1-1/p$, we obtain
\begin{equation}\label{est:S1}
\Sigma_1 =\Sigma_{1,1}+\Sigma_{1,2} \lesssim 2^{q'(r+1-1/p)t/2} \times \left\{ 
\begin{array}{ll}
\frac{(\log N)^{d-1}}{N^{rq'}} & \mbox{if } r < 1,\\[0.5em]
\frac{(\log N)^d}{N^{q'}} & \mbox{if } r=1,\\[0.5em]
\frac{(\log N)^{d-1}}{N^{q'}} & \mbox{if } r>1.
\end{array}\right. 
\end{equation}

Finally, we have to consider the partial sum
\begin{eqnarray*}
\Sigma_2 & \coloneqq & \sum_{\bsj \in \bbN_{-1}^d \setminus \bbN_0^d}2^{-|\bsj|_1 (r-1/p) q'} \left(\sum_{\bsk \in \bbD_{\bsj}} |c_{\bsj, \bsk}(\S_{d,N})|^{p'}\right)^{q'/p'}\\
& = & \sum_{\uu \subsetneq [d]} \sum_{\bsj \in \mathcal{N}_{\uu}} 2^{-|\bsj|_1 (r-1/p) q'} \left(\sum_{\bsk \in \bbD_{\bsj}} |c_{\bsj, \bsk}(\S_{d,N})|^{p'}\right)^{q'/p'} 
\end{eqnarray*}
where for $\uu \subseteq [d]$ we put $\mathcal{N}_{\uu}\coloneqq\{\bsj \in \bbN_{-1}^d : j_i \in \bbN_0 \mbox{ if } i \in \uu, \mbox{ and } j_i=-1 \mbox{ if } i \in [d]\setminus\uu\}$. Note that the projection of an order-2 digital $(t,d)$-sequence over $\bbF_2$ onto coordinates in $\uu \subseteq [d]$ is an order-2 digital $(t,|\uu|)$-sequence over $\bbF_2$. Hence, using Lemma~\ref{le1} and \eqref{est:S1} with $d$ replaced by $|\uu|$, we obtain
\begin{equation*}
\sum_{\bsj \in \mathcal{N}_{\uu}} 2^{-|\bsj|_1 (r-1/p) q'} \left(\sum_{\bsk \in \bbD_{\bsj}} |c_{\bsj, \bsk}(\S_{d,N})|^{p'}\right)^{q'/p'}
\lesssim 2^{q'(r+1-1/p)t/2} \times \left\{ 
\begin{array}{ll}
\frac{(\log N)^{|\uu|-1}}{N^{rq'}} & \mbox{if } r < 1,\\[0.5em]
\frac{(\log N)^{|\uu|}}{N^{q'}} & \mbox{if } r = 1,\\[0.5em]
\frac{(\log N)^{|\uu|-1}}{N^{q'}} & \mbox{if } r>1.
\end{array}\right. 
\end{equation*}
Note that for $\uu=\emptyset$ we obtain $c_{(-1,\ldots,-1),(0,\ldots,0)}(\S_{d,N})=0$. Hence, for $r \ge 1/p$, 
\begin{equation*}
\Sigma_2 \lesssim 2^{q'(r+1-1/p)t/2} \times \left\{ 
\begin{array}{ll}
\frac{(\log N)^{d-2}}{N^{rq'}} & \mbox{if } r < 1,\\[0.5em]
\frac{(\log N)^{d-1}}{N^{q'}} & \mbox{if } r = 1,\\[0.5em]
\frac{(\log N)^{d-2}}{N^{q'}} & \mbox{if } r>1.
\end{array}\right. 
\end{equation*}
Finally,
\begin{equation*}
\wce(\S_{d,N}, S^r_{p, q}B(\bbT^d))^{q'} = \Sigma_1 + \Sigma_2 \lesssim 2^{q'(r+1-1/p)t/2} \times \left\{ 
\begin{array}{ll}
\frac{(\log N)^{d-1}}{N^{rq'}} & \mbox{if } r < 1,\\[0.5em]
\frac{(\log N)^{d}}{N^{q'}} & \mbox{if } r = 1,\\[0.5em]
\frac{(\log N)^{d-1}}{N^{q'}} & \mbox{if } r>1,
\end{array}\right.
\end{equation*}
and the result is obtained by taking the $q'$-th root, recalling that $1/q' = 1 - 1/q$.
\end{proof}

\section*{Acknowledgment}

The first and second authors acknowledge the support of the Austrian Science Fund (FWF) Project  P 34808/Grant DOI: 10.55776/P34808. For open access purposes, the authors have applied a CC BY public copyright license to any author accepted manuscript version arising from this submission.

\noindent{\bf Authors' Addresses:}

\noindent Peter Kritzer, Johann Radon Institute for Computational and Applied Mathematics, Austrian Academy of Sciences, Altenbergerstra{\ss}e 69, 4040 Linz, Austria.\\ Email: peter.kritzer(AT)oeaw.ac.at\\ ORCiD: 0000-0002-7919-7672\\

\noindent Nicolas Nagel, Johann Radon Institute for Computational and Applied Mathematics, Austrian Academy of Sciences, Altenbergerstra{\ss}e 69, 4040 Linz, Austria.\\ Email: nicolas.nagel(AT)ricam.oeaw.ac.at\\ ORCiD: 0009-0004-3362-3543\\

\noindent Friedrich Pillichshammer, Institut f\"{u}r Finanzmathematik und Angewandte Zahlentheorie, JKU Linz, Altenbergerstra{\ss}e 69, 4040 Linz, Austria.\\ Email: friedrich.pillichshammer(AT)jku.at\\ ORCiD: 0000-0001-6952-9218

\end{document}